\pgfplotsset{cycle list/Dark2}
\pgfplotsset{cycle list/Set2}
\pgfplotsset{compat=newest}
\pgfplotsset{plot coordinates/math parser=false}
\pgfplotsset{try min ticks=3}
\newlength\figureheight
\newlength\figurewidth
\newtheorem{lemma}{Lemma}[section]
\theoremstyle{remark}
\numberwithin{equation}{section}
\renewcommand{\(}{\left(}
\renewcommand{\)}{\right)}
\newcommand{\rfr}{^\text{ref}}
\newcommand{\eps}{\varepsilon}
\newcommand{\real}{\mathbb{R}}
\newcommand{\e}{\mathbf e}
\def\softd{{\leavevmode\setbox1=\hbox{d}%
		\hbox to 1.05\wd1{d\kern-0.4ex{\char039}\hss}}}%cstocs
\newcommand{\ddt}{\partial_t}%{\frac{\partial #1}{\partial t}}
\newcommand{\ddx}{\partial_x }%{\frac{\partial #1}{\partial x}}
\newcommand{\ddw}{\partial_w }%{\frac{\partial #1}{\partial x}}
\newcommand\Tstrut{\rule{0pt}{2.6ex}}         % = `top' strut
\newcommand\Bstrut{\rule[-0.9ex]{0pt}{0pt}}   % = `bottom' strut
\title{A hybrid mass transport finite element method\\ for Keller--Segel type systems}
\author{J. A. Carrillo}
\address[J. A. Carrillo]{\newline Department of Mathematics, Imperial College London, London SW7 2AZ, United Kingdom}
\email{carrillo@imperial.ac.uk}
\author{N. Kolbe}
\address[N. Kolbe]{\newline Institute of Mathematics, Johannes Gutenberg-University Mainz}
\email{kolbe@uni-mainz.de}
\author{M. Luk\'a\v{c}ov\'a-Medvi\v{d}ov\'a}
\address[M. Luk\'a\v{c}ov\'a-Medvid'ov\'a]{\newline Institute of Mathematics, Johannes Gutenberg-University Mainz}
\email{lukacova@uni-mainz.de}
\begin{document}

\maketitle

\vspace{-1cm}

\begin{abstract}
We propose a new splitting scheme for general reaction-taxis-diffusion systems in one spatial dimension capable to deal with simultaneous concentrated and diffusive regions as well as travelling waves and merging phenomena. The splitting scheme is based on a mass transport strategy for the cell density coupled with classical finite element approximations for the rest of the system. The built-in mass adaption of the scheme allows for an excellent performance even with respect to dedicated mesh-adapted AMR schemes in original variables.
\end{abstract}

\medskip

{\bf Keywords:} mass transport schemes, reaction-aggregation-diffusion systems, splitting schemes, tumor invasion models

\medskip

\section{Introduction}
The aim of the present work is to design a numerical scheme capable to deal with concentrations and diffusion phenomena typically arising in one-dimensional taxis-diffusion systems of the form
\begin{equation} \label{eq:systemtype}\left\{
\begin{aligned}
\ddt \rho &= \ddx  \(D_\rho \ddx \rho - \chi \rho \ddx c \) + R_\rho(\rho)& \text{in }(0,\infty) \times (a,b), \\
\varepsilon \ddt c &= D_c \ddx^2 c + R_c(\rho, c) & \text{in }(0,\infty) \times (a,b),\\
	\ddx \rho(\cdot, r) &= \ddx c(\cdot, r)=0, &r\in\{a,b\},\\
	\rho(0,\cdot) &= \rho_0 \geq 0, \quad  c(0,\cdot) = c_0 \geq 0 &
\end{aligned}\right.
\end{equation}
with Lipschitz continuous source terms $R_\rho, \, R_c$ that satisfy $R_\rho(0), R_c(\rho, 0)\geq 0$. Here $\rho$ denotes the cell density and $c$ the concentration of a chemo-attractant. These systems constitute adaptations of the classical cell migration model by Patlak, Keller and Segel \cite{Patlak.1953, KS.1970}. They have been widely used in the modeling of biological processes such as cell organization in tissue, immune system dynamics and cancer growth \cite{Esipov.1998, Anderson.2000, EMT_paper}.
The dynamics of their solutions are quite rich; apart from traveling waves \cite{Kong.2008} the aggregation phenomenon studied in \cite{jager1992explosions,BDP} that leads to blowup in finite time is of specific interest. One has moreover observed the occurrence of high concentrations that can emerge in a smooth solution, split, and merge with each other \cite{Painter.2011}. Nonlinear diffusions or saturated responses in the chemotactic sensitivity are natural ways to include volume filling effects into the models, see \cite{HPVolume,CCVolume}. They usually avoid blow-up in a biologically meaningful way and lead to interesting phenomena and asymptotic stabilization. Finally, these models are basic building bricks for a variety of cancer invasion models in the literature \cite{Chaplain.2005, EMT_paper,  Preziosi.2003, Stinner.2015, Johnston.2010} in which the coupling with extracellular matrix, enzymatic activators and other substances are taken into account. One of the common features in all of these models is the simultaneous occurrence of regions of high concentrated densities with diffuse profiles leading to numerical difficulties in choosing well-adapted meshes.
The numerical approximation of all of these simultaneous phenomena is particularly challenging.

In \cite{Carillo.2008} a mass transport steepest descent scheme has been proposed to resolve a modified 1D Keller-Segel system for the log interaction kernel proposed in \cite{Calvez.2007}. The method satisfies a discrete free energy dissipation principle by design being based on the variational schemes for Fokker-Planck type equations introduced in \cite{jordan1998variational,KW} and applied to Keller-Segel type models in \cite{Carillo.2008,Blan.sys}. By considering the problem in transformed variables the method can resolve areas of high concentrations accurately without any mesh refinement. This approach has been extended to several dimensions for nonlinear aggregation-diffusion equations and with different approaches in the discretization in \cite{carrillo2009numerical,MO2014,CRW,JMO} and the references therein.

The aim of this work is to extend the mass transport approach to the general class of systems \eqref{eq:systemtype}. We will test different scenarios that feature in particular the splitting, traveling and emerging of concentrations. For the adjustment of the scheme we propose a splitting method, where we employ the technique from \cite{Carillo.2008} to the Keller-Segel part of the system (i.e. the first equation of \eqref{eq:systemtype} with $R_c = 0$). The remaining system of an ODE and a diffusion reaction equation will be decoupled and solved by a suitable finite element method. The advantage of the mass transport approach for the cell densities equations is that the mesh adapts naturally to the mass distribution, and then coarse meshes in the mass variable can still lead to good numerical approximations as we will discuss below.

In more details, we split \eqref{eq:systemtype} into two subsystems. The solution of the full system \eqref{eq:systemtype} can then be approximated by appropriately combining short time solution of the subsystems. We introduce at first the diffusion-advection system given by
\begin{equation} \label{eq:split_KS} \tag{I}\left\{
\begin{aligned}
\ddt \rho &= \ddx \(D_\rho \ddx \rho - \chi \rho \ddx c \) & \text{in }(0,\infty) \times (a,b), \\
\ddt c &= 0 & \text{in }(0,\infty) \times (a,b), \\
	\ddx \rho(\cdot, r) &=0, &r\in\{a,b\},\\
\rho(0,\cdot) &= \rho^I_0 \geq 0, \quad  c(0,\cdot) = c^I_0 \geq 0.
\end{aligned}\right.
\end{equation}
This system makes the assumption of a steady chemo-attractant density $c$ and mass conservation in the cell density $\rho$. Second, we consider the reaction-diffusion system
\begin{equation} \label{eq:split_ReacDiff} \tag{II}\left\{
\begin{aligned}
\ddt \rho&=  R_\rho(\rho )& \text{in }(0,\infty) \times (a,b),  \\
\ddt c &= D_c \ddx^2 c + R_c(\rho, c)& \text{in }(0,\infty) \times (a,b),\\
\ddx c(\cdot, r)&=0, &r\in\{a,b\}, \\
\rho(0,\cdot) &= \rho^{II}_0 \geq 0, \quad  c(0,\cdot) = c^{II}_0\geq 0&
\end{aligned}\right.
\end{equation}
that contains the remaining terms of the system. Following the mass transport algorithm \cite{Carillo.2008} we transform the system \eqref{eq:split_KS} into new variables. With this aim, we consider the pseudo inverse cumulative distribution of the cell density $\rho$,
\begin{equation}\label{eq:pseudoinv}
	V(t, w) = \inf \left\{ y : \int_{a_I}^y c(x,t)\, dx > w \right\},
\end{equation}
which is defined by
$$
0\leq w \leq \int_a^b \rho(t,x)\, dx = m(t) \,.
$$
The system \eqref{eq:split_KS} can now be rewritten, following e.g. \cite{Carrillo.2005}, as
\begin{equation} \label{eq:split_KS_trans} \tag{I'}\left\{
\begin{aligned}
\ddt V &= - D_\rho \ddw \( \left[ \ddw V \right]^{-1} \) + \chi \ddx c |_{(x=V(w))}& \text{in }(0,\infty) \times (0,m),\\
\ddt c &= 0 &  \text{in }(0,\infty) \times (a,b),\\
V(\cdot, 0) &= a, \qquad V(\cdot, M) = b,\\
V(0,\cdot) &= V_I , \quad  c(0,\cdot) = c_I \geq 0, &
\end{aligned}\right.
\end{equation}
where $m$ denotes a given mass during this splitting step. The advantage of the proposed splitting is that the mass of cell densities does not change over the first step and the cell density is fixed over the second step.

The details of the full discretization of the proposed splitting scheme will be given in Section 2. In Section 3 we discuss the choice of the constraints in the time, spatial and mass steppings due to the choice of the full discrete schemes. Section 4 is devoted to study in detail the performance of this splitting scheme in many complex situations ranging from the simpler Keller-Segel type systems and their small variations to quite more biologically relevant systems in tumor invasion as discussed above. We will analyze the experimental convergence and the computational cost of this discretization with respect to previous schemes with mesh-refinement algorithms in original spatial variables. Finally we conclude in Section 5.

%%%%%%%%%%%%%%%%%%%%%%%%%%%%%%%%%%%

\section{Numerical method} \label{section:num}
In what follows, we describe a numerical treatment for both systems \eqref{eq:split_KS_trans} and \eqref{eq:split_ReacDiff}. The inverse distribution $V$ is given on the time evolving \emph{mass space} $(0,m_h(t))$, whereas the chemo-attractant $c$ is given in the Eulerian coordinates in $(a,b)$. This leads to two meshes that the proposed numerical method employs.

First, we discretize the normalized mass domain $(0,1)$, on which the pseudo inverse distribution $V$ resides by the mesh
\begin{equation*}
	0 = w_0 < w_1 <\dots <w_M=1, \quad w_j = j  h_w, \quad j = 0,\dots, M
\end{equation*}
with length $M \in \mathbb{N}$ and width $h_w = 1/M$ that corresponds to the width $\Delta w(t)=m_h(t) h_w$ in the time evolving mass domain $(0,m_h(t))$. We denote the point values of $V$ by $V_j (t) = V(m_h(t)\,w_j, t)$ for $j=0,\dots,M$ and introduce the linear spline in $w$ connecting the discrete values that we denote by $V_h(t,m_h(t)\,w)$. Here we have used the discrete mass of the cells
\begin{equation*}
m_h(t) = \int_a^b \rho_h(t,x)\, dx,
\end{equation*}
where $\rho_h$ is a discrete representation of the cell density to be defined later on.

A second mesh partitions the physical space $(a,b)$ for the chemo-attractant density $c$ into
 \begin{equation}\label{eq:c_grid}
 a = x_0 < x_1 <\dots <x_N=b, \quad x_k = a + k  \Delta x, \quad k = 0,\dots, N.
 \end{equation}
 The chemo-attractant mesh is thus of length $N$ and width $\Delta x = (b-a)/N$.
We employ a linear finite element representation for the chemo-attractant density $c$. Therefore let $\{ \phi_k, ~k=1,\dots, N-2\}$ be the basis of piecewise linear hat functions on the grid \eqref{eq:c_grid} satisfying the boundary conditions. In particular, we have
\begin{equation*}
\phi_k(x) = \begin{cases}
(x-x_{k-1}) /\Delta x,& x_{k-1} \leq x \leq x_k, \\
(x_{k+1}- x) /\Delta x,&  x_{k} \leq x \leq x_{k+1},\\
0, &\text{otherwise}
\end{cases}, \qquad k= 2, \dots, N-2.
\end{equation*}
in the center of the domain and
\begin{align*}
\phi_1(x) &= \begin{cases}
1 ,& a \leq x \leq x_1, \\
(x_2  - x) /\Delta x,&  x_{1} \leq x \leq x_{2},\\
0, &\text{otherwise},
\end{cases}\\[1ex]
\phi_{N-1}(x) &= \begin{cases}
(x-x_{N-2} ) /\Delta x,& x_{N-2} \leq x \leq x_{N-1}, \\
1,&  x_{N-1} \leq x \leq b,\\
0, &\text{otherwise}
\end{cases}
\end{align*}
near the boundary. By using the basis functions we can define the approximate chemo-attractant density as
\begin{equation*}
c_h(x,t) = \sum_{k=1}^N c_i(t) \phi_i(x).
\end{equation*}
For the construction of the splitting method we define solution operators for both systems \eqref{eq:split_KS_trans} and \eqref{eq:split_ReacDiff}.
 To this end we design $T$ to be a numerical solution operator of system \eqref{eq:split_KS_trans} in the following sense: if $(V_h(\tilde t), c_h(\tilde t), m_h(\tilde t))$ is a numerical solution at $t=\tilde t$ then $ T_{\Delta t}(V_h(\tilde t), c_h(\tilde t), m_h(\tilde t)) $ is a numerical solution of system \eqref{eq:split_KS_trans} at time $t = \tilde t + \Delta t$. In the same manner, we define also a solution operator $S$ for system \eqref{eq:split_ReacDiff}.

\subsection{The solution operator $T$ for system \eqref{eq:split_KS_trans}}

For a discretization of the system $\eqref{eq:split_KS_trans}$ we need to evaluate the derivative of the chemo-attractant concentration in the state variable $V$. With this aim we consider an interpolation by cubic splines of the discrete chemo-attractant concentration.
Let $(V_h(t), c_h(t), m_h(t))$ be given initial data. By $\hat c_h$ we denote the cubic spline over the data points $(x_k, c_h(t,x_k))$ for $k=1,\dots,N$ that satisfies the boundary conditions $\ddx \hat c_h(a) = \ddx \hat c_h(b)=0$. We use this spline for the approximation of the advection term. Concerning the time integration we split the taxis and diffusion terms and treat the stiff diffusion terms implicitly. In this way we allow for both large time steps and stability of the scheme. We apply in particular the two stage implicit-explicit midpoint scheme (see e.g. \cite{pareschi2005implicit}) that reads in our case
\begin{subequations}
\begin{equation}\label{eq:V_scheme_I}
- 2\frac{\tilde V_j (t) - V_j(t)}{\Delta t} = \frac{D_\rho}{\tilde V_{j+1}(t) - \tilde V_{j}(t)} - \frac{D_\rho}{\tilde V_j(t) - \tilde V_{j-1}(t)}
- \chi  \ddx \hat c_h(V_j(t)),
\end{equation}

\begin{equation}\label{eq:V_scheme_II}
- \frac{T_{\Delta t} V_j (t) - V_j(t)}{\Delta t} = \frac{D_\rho}{\tilde V_{j+1}(t) - \tilde V_{j}(t)} - \frac{D_\rho}{\tilde V_j(t) - \tilde V_{j-1}(t)}
- \chi \ddx \hat c_h(\tilde V_j(t))
\end{equation}
\end{subequations}
both for $j=0, \dots,M$. We have approximated the diffusion terms above by a central difference formula as in \cite{Carillo.2008}. At the boundary we impose Neumann boundary
conditions, i.e.
$$
\frac{1}{\tilde V_{M+1}(t) -\tilde V_{M}(t)} = \frac{1}{\tilde V_{0}(t) -\tilde V_{-1}(t)}=0\,.
$$
The intermediate stage $\tilde V_j(t)$ is given by a nonlinear implicit equation \eqref{eq:V_scheme_I} and we use the Newton's method for its computation. For the computation of the taxis terms in \eqref{eq:V_scheme_I} and \eqref{eq:V_scheme_II} we evaluate the afore determined spline $\hat c_h$.

The chemo-attractant density as well as the mass of the cells are not affected by system $\eqref{eq:split_KS_trans}$, hence we define the numerical operator accordingly by
$$
T_{\Delta t} c_h(t) = c_h(t), \quad T_{\Delta t} m_h(t) = m_h(t).
$$

Note that if instead of linear diffusion, i.e. $D_\rho$ constant, we have a power-law nonlinear diffusion $D_\rho(\rho)=D_\rho \rho^{\gamma-1}$, $\gamma>1$, modelling cell volume size effects as in \cite{HPVolume,CCVolume}, we obtain a similar approximation
\begin{subequations}
	\begin{equation}\label{eq:V_scheme_IKSLnD}
	- 2\frac{\tilde V_j (t) - V_j(t)}{\Delta t} = \frac{\tilde D(t)}{(\tilde V_{j+1}(t) - \tilde V_{j}(t))^{\gamma}} - \frac{\tilde D(t)}{(\tilde V_j(t) - \tilde V_{j-1}(t))^{\gamma}} - \chi  \ddx \hat c_h(V_j(t)),	\end{equation}
	
	\begin{equation}\label{eq:V_scheme_IIKSLnD}
	- \frac{T_{\Delta t} V_j (t) - V_j(t)}{\Delta t} = \frac{\tilde D(t)}{(\tilde V_{j+1}(t) - \tilde V_{j}(t))^{\gamma}} - \frac{\tilde D(t)}{(\tilde V_j(t) - \tilde V_{j-1}(t))^{\gamma}}
	- \chi  \ddx \hat c_h(\tilde V_j(t))
	\end{equation}
\end{subequations}
with $\tilde D(t)=D_\rho\gamma^{-1}\Delta w(t)^{\gamma-1}$, $j=0,\dots, M$ and similar boundary conditions as above. Remember that the continuous function $T_{\Delta t} V_h(t)$ is built as the linear interpolant of the values $T_{\Delta t} V_{j}(t)$ for $j=0,\dots, M$, and thus we can define a reconstructed density $T_{\Delta t}\rho_h (t)$ by its own definition
\begin{equation}\label{eq:recden}
T_{\Delta t}\rho_h (t) = \left( \frac{\partial T_{\Delta t}V_h(t)}{\partial w}\right)^{-1}
\end{equation}
as long as the sequence $V_{j}(t)$ is strictly increasing.

\subsection{The solution operator $S$ for system \eqref{eq:split_ReacDiff}}

In the splitting method that we propose we will apply the reaction-diffusion operator $S$ starting with the data $(T_{\Delta t} V_h(t), T_{\Delta t}c_h( t), T_{\Delta t}m_h(t))$ obtained from a previous evaluation of the operator $T$. For simplicity we will describe the numerical operator $S$ for general initial data $(V_h(t), c_h( t), m_h(t))$.

System \eqref{eq:split_ReacDiff} is formulated for physical concentrations of cells. To provide adequate initial data using the given approximations $(V_h(t), c_h(t), m_h(t))$ we transform the discrete pseudo inverse distribution $V_h(t)$ on $(0,m_h(t))$ to a finite volume representation of $\rho(t, \cdot)$ on $(a,b)$. Since the approximate density $\rho_h$ satisfies
\begin{equation*}
\int_{V_{j-1}(t)}^{V_j(t)} \rho_h(t,x) \, dx = \Delta w(t),
\end{equation*}
for all $j=1,\dots,M$ by construction \eqref{eq:recden}, we can introduce the cell averages and the piecewise constant function $\rho_h$ in the following way
\begin{equation*}%\label{eq:def_rhoh}
\rho_j(t) = \frac{\Delta w(t)}{V_j(t)- V_{j-1}(t)}, \quad j = 1,\dots,M, \quad \rho_h(t,x) = \sum_{j=1}^{M} \rho_j(t)\chi_{(V_{j-1}(t), V_j(t))}(x).
\end{equation*}
This approximation of the cell density resides on physical space $(a,b)$. Note though that the cell averages are given on a non-uniform grid which differs from the grid for the chemo-attractant density $c$ given in \eqref{eq:c_grid}.

Now, we are in the position to write down the scheme for system \eqref{eq:split_ReacDiff}. Again we split diffusion from reaction and apply the implicit-explicit midpoint scheme and obtain
\begin{subequations}\label{eq:reac_update}
\begin{equation}\label{eq:cell_reac_update1}
\tilde \rho_j(t) = \rho_j(t) + \frac{\Delta t}{2} R_\rho (\rho_j(t)), \quad j=1, \dots,M,
\end{equation}
\begin{equation}\label{eq:attr_update1}
2 \varepsilon \,\frac{\tilde c_k(t) - c_k(t) }{\Delta t}\!\int_{a}^{b} \!\!\phi_k \phi_l \,dx = -\tilde c_k(t) D_c \int_{a}^{b} \frac{\partial \phi_k}{\partial x} \frac{\partial \phi_l}{\partial x}\,dx + \int_{a}^{b} \!\!R_c(\rho_h(t), c_h(t)) \phi_l\, dx, \quad k,l = 1,\dots, N-1,
\end{equation}
\begin{equation} \label{eq:cell_reac_update2}
S_{\Delta t}\rho_j(t) = \rho_j(t) + \frac{\Delta t}{2} R_\rho (\tilde \rho_j(t)), \quad j=1, \dots,M,
\end{equation}
\begin{equation}\label{eq:attr_update2}
\varepsilon \,\frac{S_{\Delta t} c_k(t) - c_k(t) }{\Delta t}\!\!\int_{a}^{b} \!\!\phi_k \phi_l \,dx = -\tilde c_k(t) D_c \!\! \int_{a}^{b} \!\frac{\partial \phi_k}{\partial x} \frac{\partial \phi_l}{\partial x}\,dx + \!\!\int_{a}^{b} \!\!\!R_c(\tilde \rho_h(t), \tilde c_h(t)) \phi_l\, dx, \quad k,l = 1,\dots, N-1.
\end{equation}
\end{subequations}
As usual, we employ precomputed integrals of the basis functions
$$
 \int_{a}^{b} \phi_k \phi_l \,dx  \qquad \mbox{and} \qquad
\int_{a}^{b} \frac{\partial \phi_k}{\partial x} \frac{\partial \phi_l}{\partial x}\,dx
$$
in the computation of the linear systems \eqref{eq:attr_update1} and \eqref{eq:attr_update2}. The integrals of the form
 $\int_{a}^{b} R_c(\rho_h(t), c_h(t)) \phi_l\, dx$ are dependent on $V_h(t)$. For their computation we use suitable quadratures together with an indicator function to identify the position of a particular point $x\in [a,b]$ on the grid corresponding to the cell density $\rho_h$.
The reaction update in the cell density $c_h$ alters the mass of the cells over the interval $\Omega$. Thus we update $m_h(t)$ by
\begin{equation*}
S_{\Delta t} m_h(t) = \sum_{j=1}^{M} S_{\Delta t}\rho_j(t) (V_j(t)- V_{j-1}(t)).
\end{equation*}
To be able to apply the advection-diffusion operator after the reaction-diffusion update we transform $S_{\Delta t}\rho_h(t)$ to its inverse distribution representation $S_{\Delta t} V_j(t)$. Therefore, we use the formula
\begin{equation}\label{eq:V_reaction_update}
\int_{S_{\Delta t} V_{j-1}(t)}^{S_{\Delta t} V_j(t)} \sum_{j=1}^{M} S_{\Delta t} \rho_j(t)\, \chi_{(V_{j-1}(t), V_j(t))}(x)\, dx = S_{\Delta t} m_h(t) h_w, \quad j = 1,\dots,M.
\end{equation}
As long as $S_{\Delta t} V_j(t)$ is monotonically increasing in $j$, identity \eqref{eq:V_reaction_update} allows for an efficient update of the inverse distribution $V_h$.

\subsection{The splitting method}
To approximate the full system \eqref{eq:systemtype} we propose the classical Strang splitting method \cite{strang} employing both numerical operators defined above.
For given non-negative and sufficiently smooth initial conditions $\rho_0$ and $c_0$ of system \eqref{eq:systemtype} we deduce discrete initial data $(V_h(0), c_h(0), m_h(0))$. To compute a discrete representation $V_h(0)$ of the normalized concentration $\rho_0/m_h(0)$ we integrate as in \eqref{eq:V_reaction_update}.

Then we define the fully discrete Strang splitting scheme for system \eqref{eq:systemtype} iteratively by
\begin{equation}\label{eq:full_strang}
(V_h(t^{n+1}), c_h( t^{n+1}), m_h(t^{n+1})) = T_{\Delta t^n /2} S_{\Delta t^n} T_{\Delta t^n /2} (V_h(t^n), c_h( t^n), m_h(t^n)), \quad n = 0,1,2, \dots ,
\end{equation}
where $0=t^0< t^n = \sum_{i=1}^{n} \Delta t^i$ is a discretization of the time axis.
In this way we alternate between applying the diffusion-taxis and the diffusion-reaction operator. The symmetrical structure leads to the second order splitting error.

To optimize the efficiency we adapt the time increment $\Delta t$ in each time step. Since the discretization of system \eqref{eq:split_KS} is more sensitive to instabilities that are caused by large time increments $\Delta t$ than the discretization of the diffusion--reaction system, we start the method in each time step with the numerical operator $T$ in which we determine $\Delta t^n$. We will elaborate on the stability of the scheme in Section \ref{sec:monotonicity}.

The scheme \eqref{eq:full_strang} is not limited to the case of a single pair of a cell and an chemo-attractant. An extension to multiple attractants (i.e. a replacement of $\chi \rho \nabla c$ by a sum $\chi_1 \rho \nabla c_1 + \dots + \chi_n \rho \nabla c_n$ in \eqref{eq:systemtype}) is straightforward. The case of multiple cell densities coupled through the taxis terms, such as in the model discussed in \cite{EMT_paper}, can be treated as well. Note though that each cell species brings along another non-uniform mesh on the domain $(a,b)$ which requires further projections in the numerical operator $S$.

%%%%%%%%%%%%%%%%%%%%%%%%%%%%%%%%%%%%%%%%%

\section{Monotonicity preservation of the diffusion-taxis operator}\label{sec:monotonicity}
As demonstrated in \cite{Chertock.2008} unphysical negative values that arise in the numerical solutions of the Keller-Segel type systems can cause instabilities and wrong behavior of the scheme. Hence, the so called \emph{positivity preserving} finite volumes schemes for these kind of models have been developed, e.g. in \cite{Chertock.2008}. A non-negative density $\rho$ implies a monotonously increasing pseudo inverse distribution $V$ by its definition \eqref{eq:pseudoinv}. If a method operates on inverse distributions it should in turn preserve the discrete monotonicity of $V$. This \emph{monotonicity preserving} property of such schemes was studied in the case of filtration and convolution-diffusion equations in \cite{Gosse.2006.Lagrangian, Gosse.2006.Filtration}. In more details, We call a method monotonicity preserving if from $V_j(t)-V_{j-1}(t) > 0$ for all $0<j\leq M$ follows that also $V_j(t+\Delta t)-V_{j-1}(t + \Delta t) > 0$ for all $0<j\leq M$. 

In the rest of this section we focus on a simplified problem that motivates a way to adapt the time increment $\Delta t$ in such a way, that non-monotone solutions and thus possible related instabilities are avoided. We consider in particular the system \eqref{eq:split_KS_trans} for the case of a steady chemo-attractant $c \in C^1(a,b)$.
For the numerical resolution we consider a forward Euler scheme of the form
\begin{equation}\label{eq:forward_simple_diff}
V_j (t+\Delta t)  = V_j(t) + \Delta t\, \chi \ddx c(V_j(t)) -\Delta t \left[\frac{\tilde D(t)}{( V_{j+1}(t) - V_{j}(t))^{\gamma}} - \frac{\tilde D(t)}{(V_j(t) -  V_{j-1}(t))^{\gamma}} \right],
\end{equation}
for a discrete inverse distribution as defined in Section \ref{section:num}. This scheme can be understood as an explicit first-order version of the advection-diffusion operator introduced in the previous section.
In this setting we can follow the lines of \cite{Gosse.2006.Lagrangian, Gosse.2006.Filtration} and derive a bound on $\Delta t$ that makes the scheme \eqref{eq:forward_simple_diff} monotonicity preserving:

\begin{lemma}\label{lem:cfl}
The scheme \eqref{eq:forward_simple_diff} is monotonicity preserving, if for a fixed $\theta \in(0,1)$ both CFL conditions
\begin{subequations}
\begin{align}
\Delta t ^n &< \frac{\theta}{2\, D_\rho\, \Delta w ^{\gamma-1}} \,\min_{0 \leq j < M}  \frac{ (V_{j+1}(t^n) - V_j(t^n)) (V_{j}(t^n) - V_{j-1}(t^n))}{\max_{k=j-1,j} ~(V_{k+1}(t^n) - V_k(t^n))^{-(\gamma -1)} }, \label{eq:cfl_diff}\\[1ex]
\Delta t ^n &<  \frac{1-\theta}{\chi} \, \min_{0 \leq j < M}  \frac{ (V_{j+1}(t^n) - V_j(t^n)) }{ \left| \ddx c(V_{j+1}(t^n)) - \ddx c(V_{j}(t^n)) \right|}\label{eq:cfl_chemo}
\end{align}
\end{subequations}
 are satisfied.
\end{lemma}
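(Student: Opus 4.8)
The plan is to track the consecutive gaps of the inverse distribution and show that each stays positive under the two CFL conditions. Fix $t=t^n$, abbreviate $\tilde D=\tilde D(t^n)$, and introduce the gaps and discrete diffusive fluxes
\[
\delta_j := V_j(t)-V_{j-1}(t)>0,\qquad F_j := \frac{\tilde D}{\delta_j^{\gamma}}\quad(1\le j\le M),
\]
together with the no-flux convention $F_0=F_{M+1}=0$ coming from the Neumann boundary conditions $\frac{1}{V_{M+1}-V_M}=\frac{1}{V_0-V_{-1}}=0$. Subtracting \eqref{eq:forward_simple_diff} at the indices $j$ and $j-1$ gives, for $1\le j\le M$,
\[
\delta_j(t+\Delta t)=\delta_j+\Delta t\,\chi\bigl[\ddx c(V_j(t))-\ddx c(V_{j-1}(t))\bigr]+\Delta t\bigl[(F_j-F_{j+1})+(F_j-F_{j-1})\bigr],
\]
and the goal is to show that the right-hand side is strictly positive.

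First I would control the taxis contribution. Evaluating the minimand in \eqref{eq:cfl_chemo} at the index $j-1$ yields $\Delta t\,\chi\,|\ddx c(V_j(t))-\ddx c(V_{j-1}(t))|<(1-\theta)\,\delta_j$, so the chemotactic term is bounded below by $-(1-\theta)\delta_j$ regardless of its sign. This step is immediate once one notes that the minimum over $0\le j<M$ covers every consecutive pair, and it is precisely where the weight $1-\theta$ is spent.

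The heart of the argument is the diffusive term, which I would handle by the two-face splitting already written above, exploiting that $s\mapsto s^{-\gamma}$ is strictly decreasing for $\gamma>0$. For the face term $F_j-F_{j+1}=\tilde D(\delta_j^{-\gamma}-\delta_{j+1}^{-\gamma})$ there are two cases: if $\delta_{j+1}\ge\delta_j$ the term is nonnegative and helps; if $\delta_{j+1}<\delta_j$ then $0<\delta_{j+1}^{-\gamma}-\delta_j^{-\gamma}<\delta_{j+1}^{-\gamma}$, so $|F_j-F_{j+1}|<\tilde D\,\delta_{j+1}^{-\gamma}$, and since in this case $\min(\delta_j,\delta_{j+1})=\delta_{j+1}$, condition \eqref{eq:cfl_diff} evaluated at the index $j$ (recalling $\tilde D=D_\rho\gamma^{-1}\Delta w^{\gamma-1}$) gives $\Delta t\,\tilde D\,\delta_{j+1}^{-\gamma}<\tfrac{\theta}{2\gamma}\,\delta_j\le\tfrac{\theta}{2}\delta_j$. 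The symmetric estimate for $F_j-F_{j-1}$ uses \eqref{eq:cfl_diff} at the index $j-1$. At the boundary indices $j=1$ and $j=M$ one of the two face terms equals $F_1\ge0$ respectively $F_M\ge0$ by the no-flux convention, so no CFL bound is required there. Altogether $\Delta t\bigl[(F_j-F_{j+1})+(F_j-F_{j-1})\bigr]>-\theta\,\delta_j$.

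Combining the two bounds,
\[
\delta_j(t+\Delta t)>\delta_j-(1-\theta)\delta_j-\theta\,\delta_j=0,
\]
which proves monotonicity preservation. I expect the main obstacle to be the diffusive estimate: one must choose the two-face decomposition so that the monotonicity of $s\mapsto s^{-\gamma}$ makes the favourable direction cost nothing, and then match the crude bound $\delta_{j+1}^{-\gamma}-\delta_j^{-\gamma}<\delta_{j+1}^{-\gamma}$ against the precise product-and-$\min$ structure of \eqref{eq:cfl_diff} (the factor $\gamma^{-1}$ in $\tilde D$ supplying the necessary slack), while carefully bookkeeping the vanishing fluxes at the two endpoints.
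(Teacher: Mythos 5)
Your proposal is correct, and it follows the same overall skeleton as the paper's proof: track the consecutive gaps $\delta_j = V_j - V_{j-1}$, spend the budget $1-\theta$ on the taxis increment via \eqref{eq:cfl_chemo} and the budget $\theta$ (split as $\theta/2 + \theta/2$ over the two faces) on the diffusive increment via \eqref{eq:cfl_diff}, and conclude $\delta_j(t+\Delta t) > 0$. The one place where you genuinely diverge is the treatment of the nonlinear diffusive flux difference. The paper applies the mean value theorem to $f(x)=x^{\gamma}/\gamma$ in the density variables $\Delta w/\delta_j$, which converts the $\gamma$-power fluxes into linear-diffusion-type terms with nonnegative coefficients $\kappa_j$ and lets the new gap be written as a nonnegative combination of the three old gaps (the neighbouring contributions are then simply dropped in \eqref{eq:dv_est}); bounding $\kappa_j$ by the larger of the two adjacent densities is exactly what produces the factor $\max_{k=j-1,j}(V_{k+1}-V_k)^{-(\gamma-1)}$ in \eqref{eq:cfl_diff}. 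You instead do a sign case analysis on each face and use the elementary bound $|\delta_j^{-\gamma}-\delta_{j+1}^{-\gamma}|<\min(\delta_j,\delta_{j+1})^{-\gamma}$, identifying $\min(\delta_j,\delta_{j+1})^{-(\gamma-1)}$ with the $\max$ in \eqref{eq:cfl_diff} (valid since $\gamma\ge 1$, an assumption the paper's bound on $\kappa_j$ also needs) and absorbing the discarded term $\delta_j^{-\gamma}$ into the slack provided by the factor $\gamma^{-1}$ in $\tilde D$. Your route is more elementary — no MVT, and the linear case $\gamma=1$ is transparent — at the cost of being slightly lossier; the paper's route explains precisely why the CFL condition \eqref{eq:cfl_diff} has the product-over-max form it does, and yields the quantitative lower bound \eqref{eq:dv_est} rather than just positivity. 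Both arguments are complete and correct; your explicit handling of the boundary fluxes $F_0=F_{M+1}=0$ is a small bonus the paper leaves implicit.
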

\begin{proof}
		We consider a single time step in the scheme \eqref{eq:forward_simple_diff} and drop the superscript $n$. For brevity we will use the notation $\Delta V_{j+1/2} = V_{j+1}- V_j$. We assume the monotonicity of the discrete inverse distribution at the time instance $t$ and compute for an arbitrary $0\leq j<M$ the difference
	\begin{align*}%\label{eq:vjp1_minus_vj}
	\Delta V_{j+1/2}(t + \Delta t) &= \Delta V_{j+1/2}(t) + \Delta t\, \chi \(  \ddx c(V_{j+1}(t)) - \ddx c(V_j(t))\) \nonumber \\[1ex]
	&\quad - \frac{\Delta t \,D_\rho}{\Delta w} \left[\frac{(\Delta w) ^\gamma}{\gamma \, ( \Delta V_{j+3/2}(t))^{\gamma}} - \frac{(\Delta w) ^\gamma}{\gamma \, ( \Delta V_{j+1/2}(t))^{\gamma}}   \right] \nonumber \\&\quad+ \frac{\Delta t \,D_\rho}{\Delta w} \left[\frac{(\Delta w) ^\gamma}{\gamma \,(\Delta V_{j+1/2}(t))^{\gamma}} - \frac{(\Delta w) ^\gamma}{\gamma \,( \Delta V_{j-1/2}(t))^{\gamma}}   \right].
	\end{align*}
	By applying the mean value theorem to the function $f(x) = x^{\gamma}/\gamma$ we find two function evaluations of its derivative, $\kappa_{j}$ and $\kappa_{j+1}$, such that we obtain
	\begin{align*}
	\Delta V_{j+1/2}(t + \Delta t) &= \Delta V_{j+1/2}(t) + \Delta t\, \chi \(  \ddx c(V_{j+1}(t)) - \ddx c(V_j(t))\)  \\[1ex]
	&\quad - \Delta t \,D_\rho\, \kappa_{j+1} \left[\frac{1}{\Delta V_{j+3/2}(t)} - \frac{1}{\Delta V_{j+1/2}(t)}   \right]  \quad+ \Delta t \,D_\rho \,\kappa_{j}\left[\frac{1}{\Delta V_{j+1/2}(t)} - \frac{1}{\Delta V_{j-1/2}(t)}   \right].
	\end{align*}
	Note that by the non-negativity of $\Delta V_{j+1/2}$ both $\kappa_{j}$ and $\kappa_{j+1}$ are non-negative.
	In the next step, we define $L_{j+1/2}= (\ddx c(V_{j+1}(t)) - \ddx c(V_j(t)))/(V_{j+1}(t) - V_j(t))$ and rewrite
	\begin{align*}
	\Delta V_{j+1/2}(t + \Delta t) &=\Delta V_{j+1/2}(t) \( 1 +  \Delta t\, \chi L_{j+1/2} - \frac{\Delta t \,D_\rho\, \kappa_{j+1}}{\Delta V_{j+3/2}(t) \, \Delta V_{j+1/2}(t)} - \frac{\Delta t \,D_\rho\, \kappa_{j}}{\Delta V_{j+1/2}(t) \Delta V_{j-1/2}(t)}\) \\[1ex]
	&\quad + \frac{\Delta t \,D_\rho\, \kappa_{j+1}}{\Delta V_{j+3/2}(t) \, \Delta V_{j+1/2}(t)}\, \Delta V_{j+3/2}(t)
	+  \frac{\Delta t \,D_\rho\, \kappa_{j}}{\Delta V_{j+1/2}(t) \Delta V_{j-1/2}(t)}  \,  \Delta V_{j-1/2}(t).%- \Delta t \,D_\rho\,
	\end{align*}
	Finally we estimate by the monotonicity at time instance $t$
	\begin{equation}\label{eq:dv_est}
		\Delta V_{j+1/2}(t + \Delta t) \geq \Delta V_{j+1/2}(t) \( 1 -  \Delta t\, \chi |L_{j+1/2}| \!- \!\frac{\Delta t \,D_\rho\, \kappa_{j+1}}{\Delta V_{j+3/2}(t) \, \Delta V_{j+1/2}(t)}\! -\! \frac{\Delta t \,D_\rho\, \kappa_{j}}{\Delta V_{j+1/2}(t) \Delta V_{j-1/2}(t)}\)\!.
	\end{equation}
	By using the conditions \eqref{eq:cfl_diff} and \eqref{eq:cfl_chemo}, the non-negativity of the right hand side in \eqref{eq:dv_est} follows. This implies the monotonicity-preserving property of the scheme \eqref{eq:forward_simple_diff}.
\end{proof}

For our splitting method \eqref{eq:full_strang} we assume that we avoid time step restrictions due to the diffusion terms by our implicit treatment and take a closer look on the condition \eqref{eq:cfl_chemo} ($\theta = 0$).
The point values of the inverse distribution $V_j(t)$ for $0\leq j \leq M$ coincide with the mesh cell interfaces of the non-uniform mesh corresponding to the cell densities $\rho_h(t)$. Thus the quantity $L_{j+1/2}$ in the proof of Lemma \ref{lem:cfl} can be understood as a finite difference formula for the second derivative of the chemo-attractant density $c$. In effect, the above CFL condition \eqref{eq:cfl_chemo} motivates to choose the time increment $\Delta t^n$ according to
\begin{equation}\label{eq:splschemecfl}
\Delta t^n \propto  \( \chi  \sup_{\{ x \in I\}}|\ddx^2 c(x)| \)^{-1}.
\end{equation}

For our numerical experiments with the more complex scheme \eqref{eq:full_strang} we have accordingly computed the time increments by
\begin{equation}\label{eq:gf_cfl}
\Delta t ^n = CFL ~ \min \left\{\min_{0 \leq j < M}  \frac{ (V_{j+1}(t^n) - V_j(t^n)) }{\chi \, \left| \ddx c(V_{j+1}(t^n)) - \ddx c(V_{j}(t^n))\right|}, ~ K \Delta w \right\}
\end{equation}
for constants $CFL, K>0$. The additional bound proportional to $\Delta w$ balances the temporal and the spatial errors; large values of $K$ can be used in practice. We have chosen $CFL = 0.49$ and $K=100$ in our numerical experiments. Using this condition we have not observed any non-monotone numerical solutions in our experiments and no instabilities have occurred.

%%%%%%%%%%%%%%%%%%%%%%%%%%%%%%%%%%%%%%%%

\section{Numerical experiments}\label{sec:experiments}

In this section we apply our newly developed mass transport method to several models arising in biomedical applications that bring along merging, emerging, and traveling concentrations phenomena. In particular, we consider the  classical Keller-Segel model both elliptic and parabolic. We study also a simple as well as a detailed cancer invasion model. The latter takes the role of the serine protease urokinase-type plasminogen activator into account. The numerical study of such systems constitutes a challenge due to the complex behavior that the solutions exhibit.
Numerical experiments presented below demonstrate the robustness and reliability of our newly developed mass transport finite element method.

\subsection{A parabolic-elliptic Keller-Segel model with logistic growth}
In the first test case we consider the modified KS system from \cite{Calvez.2007} with added logistic growth which reads
\begin{equation}\label{eq:PKS_lgg}\left\{
\begin{aligned}
\ddt \rho &= \ddx \( \ddx \rho - \chi  \rho \ddx c\) + \mu \rho (1- \rho)& \text{in }(0,\infty) \times \real, \\
c(\cdot, x) &= - \frac{1}{\pi} \int_\real \log(|x-y|)\rho(\cdot,y)\,dy &\text{in }(0,\infty) \times \real,\\
\rho(0,\cdot) &= \rho_0 \geq0.
\end{aligned}\right.
\end{equation}
Note that the adaptation of system \eqref{eq:PKS_gf} to $2D$ with $\mu =0$ is equivalent to the simplified Keller-Segel system from \cite{jager1992explosions}, where the chemo-attractant $c$ is determined by a Poisson equation. The logistic term accounts for additional cell growth that is locally limited by resources and space. Global existence of solutions to the parabolic-parabolic model with logistic growth in 2D was shown in \cite{osaki2002exponential}. Except for the logistic source term this model has been numerically investigated by the mass transport scheme employing only inverse distributions in \cite{Carillo.2008}.

Since the chemo-attractant density $c$ is given by a convolution term, we do not need to use a finite element approximation. Instead we proceed as in \cite{Carillo.2008} and expand the diffusion taxis operator by
\begin{subequations}\label{eq:V_scheme_IKS}
	\begin{equation}\label{eq:V_scheme_IKSL}
	- 2\frac{\tilde V_j (t) - V_j(t)}{\Delta t} = \frac{1}{\tilde V_{j+1}(t) - \tilde V_{j}(t)} - \frac{1}{\tilde V_j(t) - \tilde V_{j-1}(t)} +\frac{\chi\, \Delta w}{ \pi} \lim\limits_{\varepsilon \rightarrow 0} \sum_{i: |\tilde V_j(t) - \tilde V_i(t)|\geq \varepsilon} \frac{1}{\tilde V_j(t)- \tilde V_i(t)},	\end{equation}
	
	\begin{equation}\label{eq:V_scheme_IIKSL}
	- \frac{T_{\Delta t} V_j (t) - V_j(t)}{\Delta t} = \frac{1}{\tilde V_{j+1}(t) - \tilde V_{j}(t)} - \frac{1}{\tilde V_j(t) - \tilde V_{j-1}(t)}
	 +\frac{\chi\, \Delta w}{ \pi} \lim\limits_{\varepsilon \rightarrow 0} \sum_{i: |\tilde V_j(t) - \tilde V_i(t)|\geq \varepsilon} \frac{1}{\tilde V_j(t)- \tilde V_i(t)}.
	\end{equation}
\end{subequations}
During the computation of \eqref{eq:V_scheme_IKSL} we control the convergence of the Newton method by comparing subsequent iterates. If the iteration fails to converge, we abort the computation assuming blowup of the numerical solution. The time increments in these experiments have been adapted such, that the Jacobian of \eqref{eq:V_scheme_IKSL} that occurs in the Newton iteration is strictly diagonally dominant.
The second operator $S$ in this setting accounts only for the logistic growth term. For the numerical simulations we use a grid with only $M=50$ points.

We consider a numerical experiment with the parameters $D_\rho = 1,~ \chi =2.5 \pi$ and the initial datum given by
\begin{equation}\label{eq:V_singlePeak_ini}
V_0(w) = \frac{w- 0.5}{\sqrt[4]{(w + 0.01)\, (1.01 - w)}}.
\end{equation}
This experiment has been studied in the case $\mu =0$ in \cite{Carillo.2008}, where blowup in final time around $t=0.33$ has been obtained numerically. We confirm the same phenomenon using the splitting method, see Figure \ref{fig:exp431}. The blowup was indicated by the method during the computation.

When conducting the experiment with altered $\mu = 0.2$, no blowup occurs, as can be seen in Figure \ref{fig:exp431prolif}. The aggregation stops and reverses since the logistic term attracts the cell concentration to a lower density. The total mass of the cells decreases after the aggregation stops and increases again after around $t=1.5$. No blowup could be observed even for later times, instead the numerical solution seems to converge to a stationary state. The CFL condition given by \eqref{eq:splschemecfl} has caused an increase of the time increment over the computation time.

\begin{figure}
	\begin{tabular}{c c}
		\includegraphics[width=0.5\linewidth]{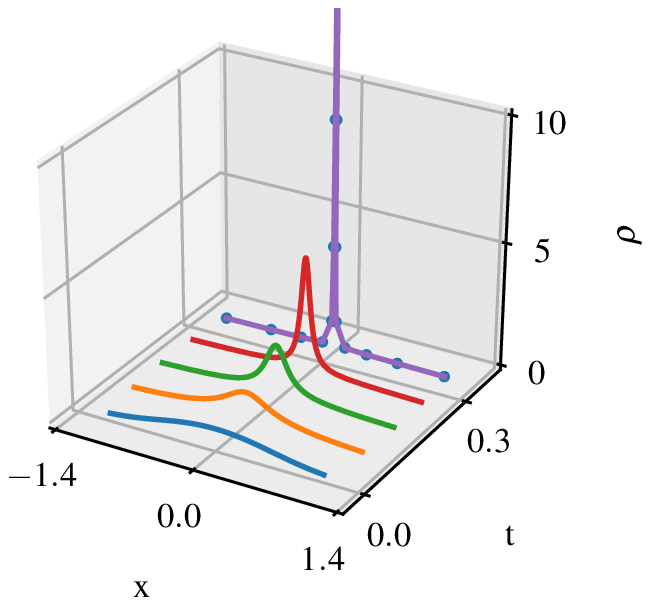} & \includegraphics[width=0.5\linewidth]{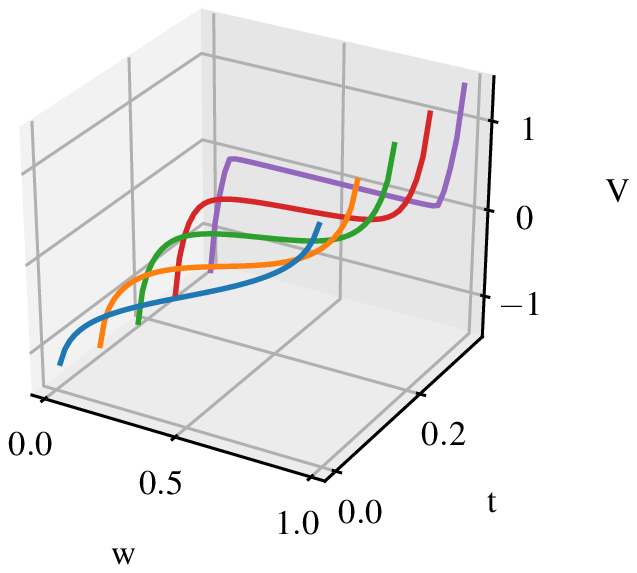}\\
	\end{tabular}
	\caption{Numerical results (cell concentration and inverse cumulative function) for the parabolic elliptic Keller-Segel model, experiment 4.3.1 in \cite{Carillo.2008}. The cell concentration blows up. The numerical cell concentration has attained a maximum of approximately $176$. }\label{fig:exp431}
\end{figure}

\begin{figure}
	\centering
\includegraphics[trim=0 0.5cm 0 0.5cm, clip]{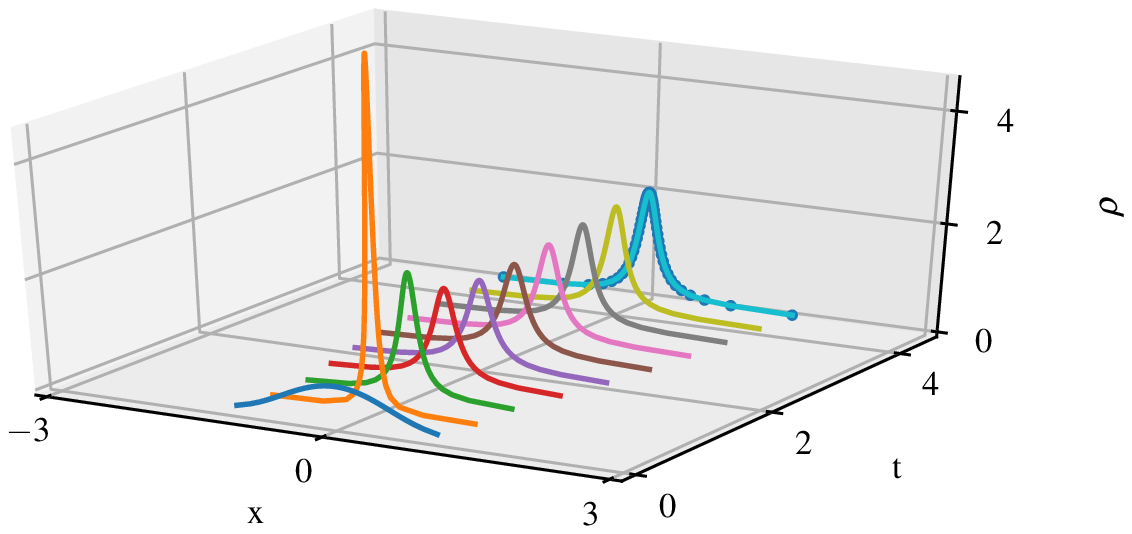}
\includegraphics[trim=0 0 0 0.5cm, clip]{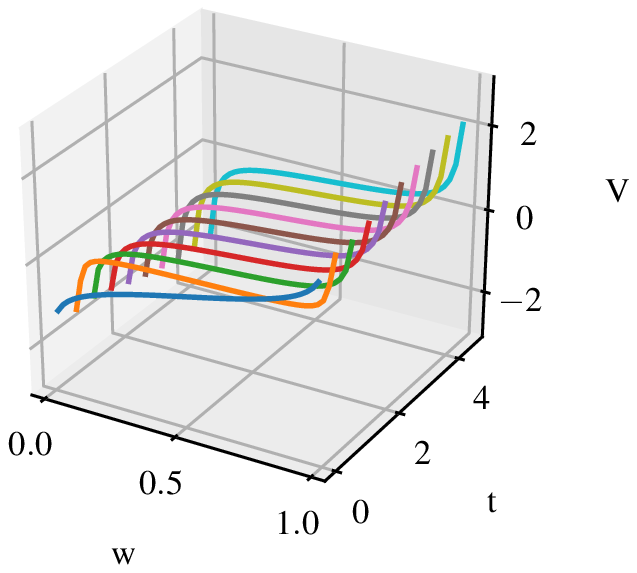}
\includegraphics[scale = 0.85]{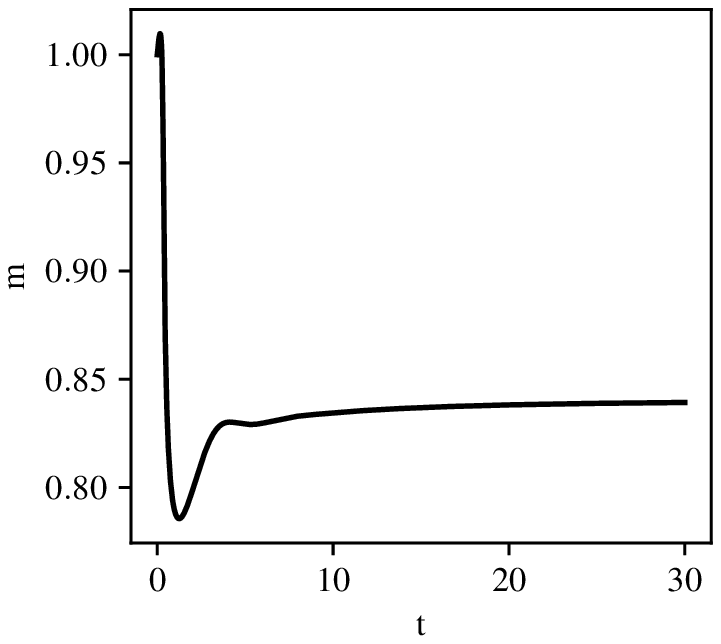}
	\caption{Numerical results (cell concentration, inverse cumulative function, and mass) for the parabolic elliptic Keller-Segel model with added logistic growth \eqref{eq:PKS_lgg}. The additional reaction term has prevented blowup.} \label{fig:exp431prolif}
\end{figure}

\subsection{Nonlinear diffusion and chemotaxis models}
Our method can also resolve models that include generalized diffusion and migration terms as we will demonstrate in this section. To this end we consider at first the model
\begin{equation}\label{eq:PKS_varm}\left\{
\begin{aligned}
\ddt \rho &= \ddx \( \rho^{\gamma-1} \ddx \rho - \chi  \rho \ddx c\)& \text{in }(0,\infty) \times \real, \\
c(\cdot, x) &= - \frac{1}{\pi} \int_\real \log(|x-y|)\rho(\cdot,y)\,dy &\text{in }(0,\infty) \times \real,\\
\rho(0,\cdot) &= \rho_0 \geq0
\end{aligned}\right.
\end{equation}
with exponent $\gamma>1$. In the case $\chi=0$ the first equation in \eqref{eq:PKS_varm} is known as the \emph{porous media equation} modeling the gas flow through a porous interface. We refer to \cite{Vazquez.2007} for an introduction to the subject.

Similar as in \eqref{eq:V_scheme_IKS}, the scheme to resolve \eqref{eq:PKS_varm} corresponds to \eqref{eq:V_scheme_IKSLnD}-\eqref{eq:V_scheme_IIKSLnD} where the chemo-attractant gradient is computed as
\begin{equation}\label{chemoconv}
	- \ddx \hat c_h(V_j(t)) = \frac{\Delta_w(t)}{\pi} \lim\limits_{\varepsilon \rightarrow 0} \sum_{i: |V_j(t) -  V_i(t)|\geq \varepsilon} \frac{1}{ V_j(t)- V_i(t)}.	
\end{equation}

\begin{figure}
	\centering
	\includegraphics[scale=0.95, trim=0 0 0 1cm, clip]{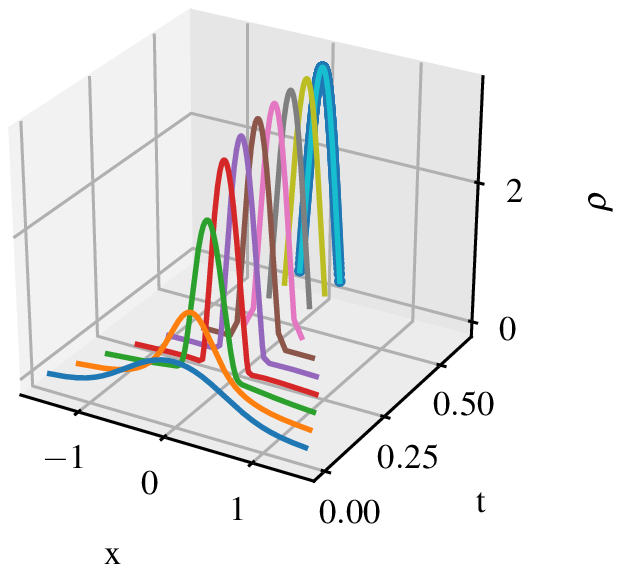}  \includegraphics[scale=0.95, trim=0 0 0 1cm, clip]{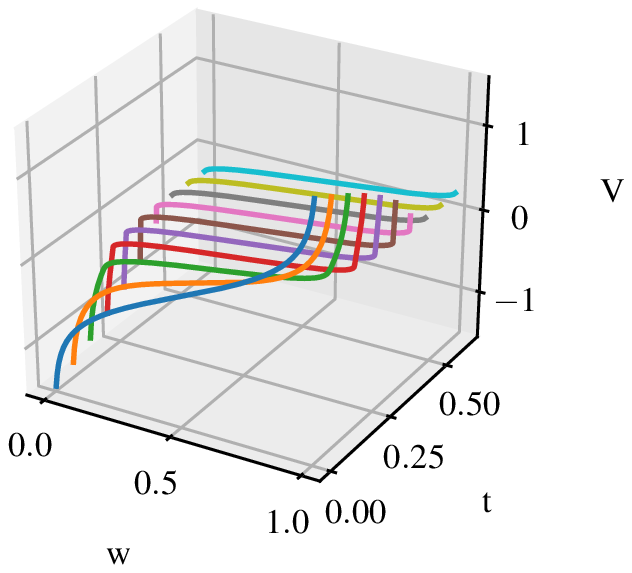}
	\includegraphics[scale=0.95, trim=0 0 0 1cm, clip]{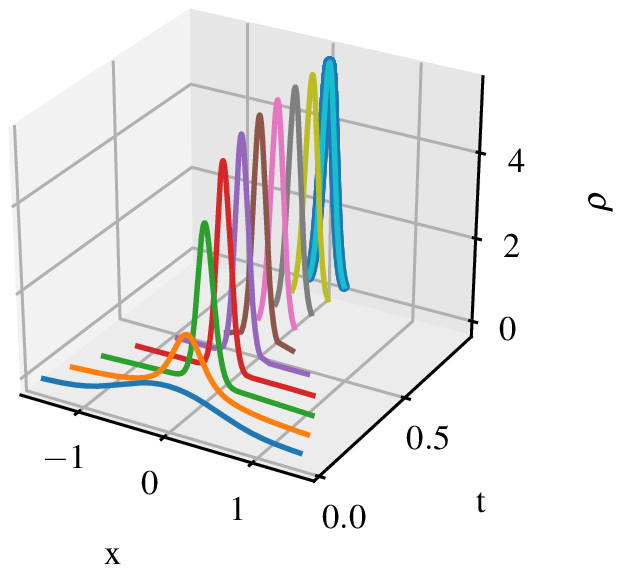}  \includegraphics[scale=0.95, trim=0 0 0 1cm, clip]{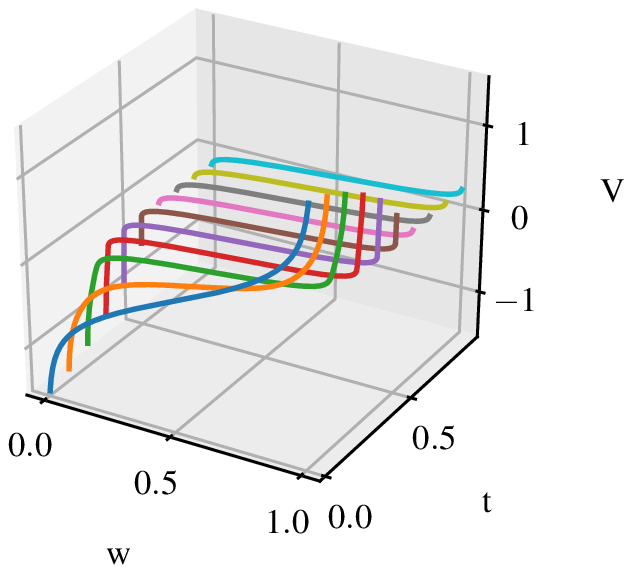}
	\caption{Numerical results (cell concentration and inverse cumulative function) for the nonlinear diffusion model \eqref{eq:PKS_varm}, initial condition \eqref{eq:V_singlePeak_ini}, $\chi=2.5\pi$ and $\gamma=2$ (top row), $\gamma=1.5$ (bottom row). For both chosen values of $m$ the numerical solution converges to a steady state. We have used $M=500$ points.}\label{fig:nonlinear_diff}
\end{figure}

We have tested our scheme using again the initial condition \eqref{eq:V_singlePeak_ini} and the chemo-sensitivity $\chi=2.5\pi$. Figure \ref{fig:nonlinear_diff} exhibits the results from the numerical simulation for the exponents $\gamma =2$ and $\gamma=1.5$. In both cases the nonlinear diffusion prevents the blowup that would occur for $\gamma =1$ and the numerical solution converges to a stationary state.

Another model that we consider here has been proposed in \cite{HPVolume}. In this work the authors endowed the Keller-Segel model with a volume filling mechanism. For this purpose they reconsidered the derivation of the model from a random walk and added a function $q(\rho)$ describing the probability that a cell finds sufficient space to jump to a particular position. We adopt here the probability function $q(\rho) = 1- \rho^\gamma$ that models the volume filling together with enhanced diffusion for $\gamma >1$ and reduced diffusion for $\gamma<1$ \cite{HPVolume}. Independent of the choice of $\gamma>1$, the model does not allow for cell migration to a position where the maximal density $\rho=1$ has already been reached. The corresponding model for the cell density includes nonlinear diffusion and advection terms and reads
\begin{equation}\label{eq:vol_filling}
	\ddt \rho = \ddx \( D_\rho (1 + (\gamma -1)\rho^\gamma) \ddx \rho - \chi (1-\rho^\gamma) \rho \ddx c\)\qquad \text{in }(0,\infty) \times \real.
\end{equation}

For the numerical experiments with the volume filling model \eqref{eq:vol_filling} we have adapted the update steps \eqref{eq:V_scheme_IKSLnD} and \eqref{eq:V_scheme_IIKSLnD} in the diffusion-advection operator by
\begin{subequations}\label{eq:vol_filling_scheme}
	\begin{align*}
	- 2\frac{\tilde V_j (t) - V_j(t)}{\Delta t} &= \frac{1}{\tilde V_{j+1}(t) - \tilde V_{j}(t)} - \frac{1}{(\tilde V_j(t) - \tilde V_{j-1}(t))}
	+\frac{(\gamma -1)\tilde D(t)}{(\tilde V_{j+1}(t) - \tilde V_{j}(t))^{\gamma}} - \frac{(\gamma -1)\tilde D(t)}{(\tilde V_j(t) - \tilde V_{j-1}(t))^{\gamma}} \nonumber \\[1ex]
	& \quad - \chi  \left[1 - \(\frac{2 \, \Delta w}{V_{j+1}(t) - V_{j-1}(t)}\)^\gamma\right] \ddx \hat c_h( V_j(t)),	
	\end{align*}
	\begin{align*}
	- \frac{T_{\Delta t} V_j (t) - V_j(t)}{\Delta t} &= \frac{1}{\tilde V_{j+1}(t) - \tilde V_{j}(t)} - \frac{1}{(\tilde V_j(t) - \tilde V_{j-1}(t))}
	+\frac{(\gamma -1)\tilde D(t)}{(\tilde V_{j+1}(t) - \tilde V_{j}(t))^{\gamma}} - \frac{(\gamma -1)\tilde D(t)}{(\tilde V_j(t) - \tilde V_{j-1}(t))^{\gamma}} \nonumber \\[1ex]
	& \quad - \chi  \left[1 - \(\frac{2 \, \Delta w}{V_{j+1}(t) - V_{j-1}(t)}\)^\gamma\right] \ddx \hat c_h( \tilde V_j(t)),
	\end{align*}
\end{subequations}
where we set $\frac{2 \, \Delta w}{V_{j+1}(t) - V_{j-1}(t)}=0$ for $j=1$ and $j=M$ to account for the boundary conditions.
\begin{figure}
	\centering
	\includegraphics[scale=0.95, trim=0 0 0 1cm, clip]{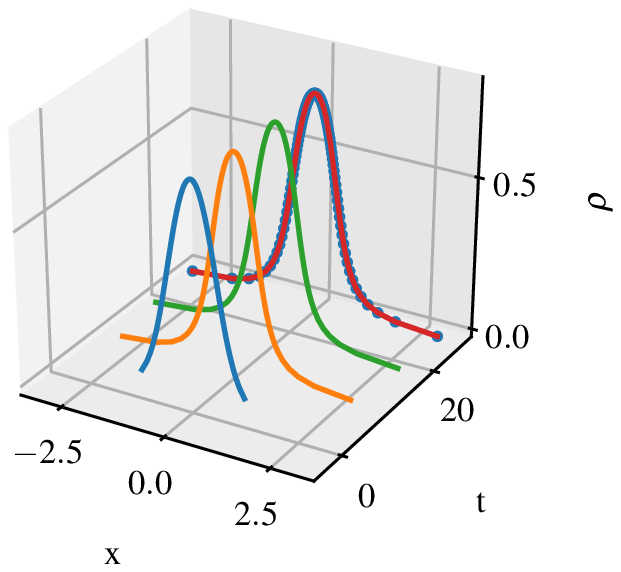}  \includegraphics[scale=0.95, trim=0 0 0 1cm, clip]{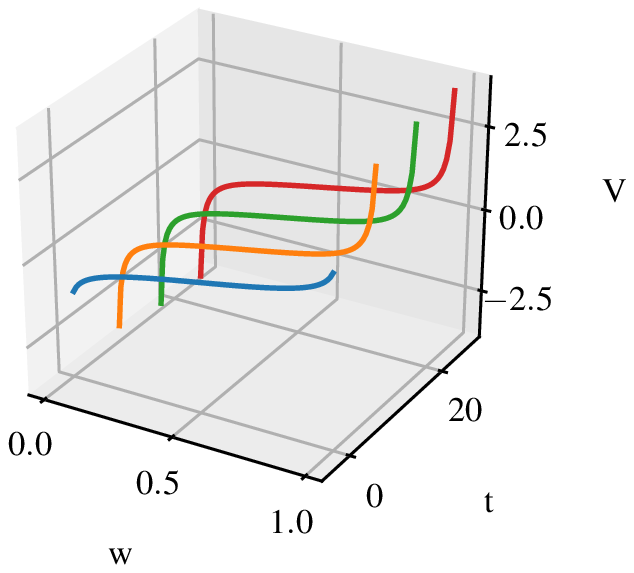}
	\includegraphics[scale=0.95, trim=0 0 0 1cm, clip]{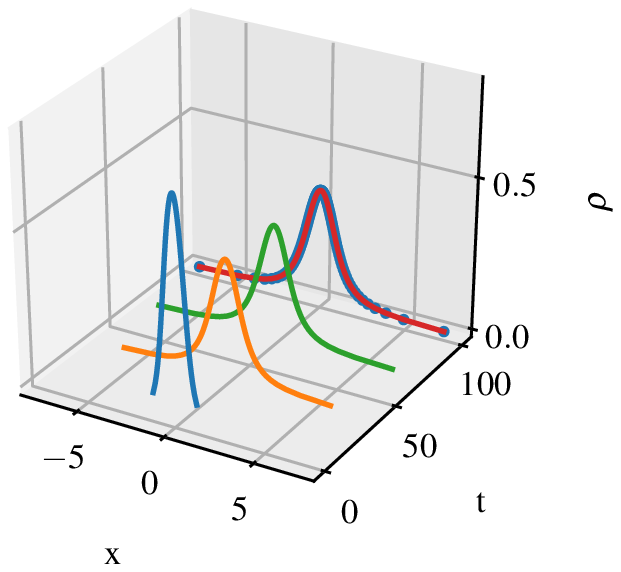}  \includegraphics[scale=0.95, trim=0 0 0 1cm, clip]{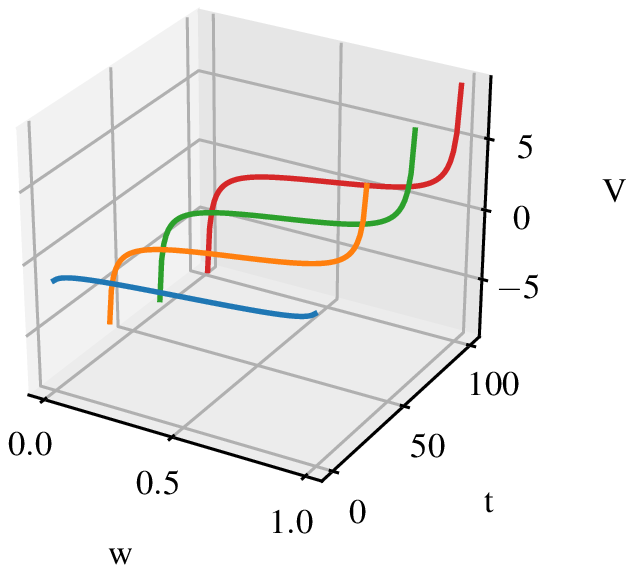}
	\caption{Numerical results (cell concentration, inverse cumulative function) for the model \eqref{eq:vol_filling} with chemo-attractant given by a convolution as in \eqref{eq:PKS_lgg}. Results are shown for $\gamma=2$ (top row) and $\gamma=0.5$ (bottom row). We have used $M=50$ points in the numerical computation.}\label{fig:vol_filling}
\end{figure}

In Figure \ref{fig:vol_filling} we present simulation results with the parabolic-elliptic model \eqref{eq:PKS_varm} where we have replaced the original evolution equation of the cell density by the volume filling approach \eqref{eq:vol_filling}. Again we have used the initial condition \eqref{eq:V_singlePeak_ini} and the chemo-sensitivity parameter $\chi=2.5\pi$. We exhibit the numerical results for $\gamma=2$ and $\gamma=0.5$. The computed cell densities do not exceed a density of one in both cases and no blowup occurs. Instead the cells evolve quickly to a bounded spatial profile from which they slowly diffuse afterwards. The parameter choice $\gamma = 2$ leads to a larger maximal cell density throughout the computation when compared to the case $\gamma=0.5$.

\subsection{The parabolic-parabolic Keller-Segel model}
In this section we apply our scheme to the well known parabolic-parabolic Keller-Segel system which reads
\begin{equation}\label{eq:PKS_gf}\left\{
\begin{aligned}
\ddt \rho &= \ddx \( D_\rho \ddx \rho - \chi  \rho \ddx c\),&\text{in }(0,\infty) \times (a,b), \\
\ddt c &= D_c \ddx^2 c +  \rho -  c, &\text{in }(0,\infty) \times (a,b),\\
	\ddx \rho(\cdot, r) &= \ddx c(\cdot, r)=0, &r\in\{a,b\},\\
\rho(0,\cdot) &= \rho_0 \geq0,\quad c(0,\cdot)= c_0 \geq 0.&
\end{aligned}\right.
\end{equation}
As opposed to \eqref{eq:PKS_lgg} this system features an additional parabolic equation to be treated by the splitting method.
In order to exhibit the phenomena that the scheme can resolve, we consider two test cases with distinct initial chemo-attractant densities that control the cell movement. In both tests we adopt the initial datum \eqref{eq:V_singlePeak_ini} for the inverse distribution $V$.

\subsubsection*{Peak movement}
For our first numerical experiment with the system \eqref{eq:PKS_gf} we use the parameters $D_\rho =0.1, ~D_c = 0.01,~\chi = 2.5,~\alpha = 0.5,~\beta = 1$ and the domain $\Omega=(a,b)$ with boundaries chosen $a=V_0(0)\approx-1.58,~b=V_0(1)\approx1.58$. As initial chemo-attractant concentration we take the logistic function
\[ c_0(x) = \frac{1}{1+ \e^{-5\, x}}, \quad x \in \Omega.\]  For the simulation we employ meshes with $M=45$ and $N=230$ points and the CFL condition \eqref{eq:gf_cfl}.

Figure \ref{fig:movement} presents the cell dynamics, showing their movement to the right side of the domain. As the cells produce the chemical with density $c$, a negative gradient is created that leads to an aggregation of the cells which counteracts the movement. We point out that both the migration and the growth are well resolved by the splitting scheme.

\begin{figure}
	\centering
	\includegraphics[trim=0 0.5cm 0 1cm, clip]{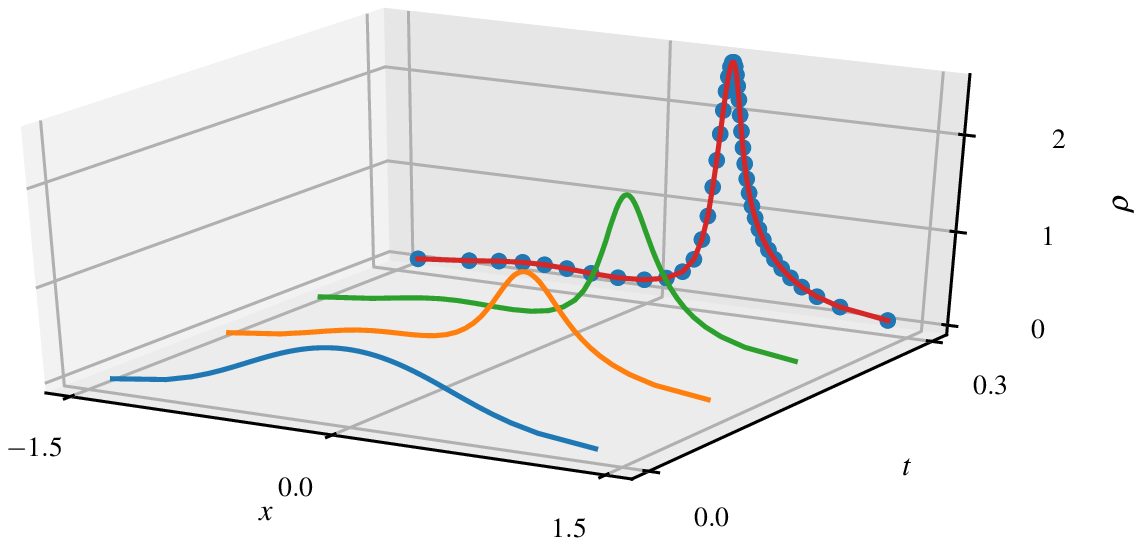}
	\includegraphics[scale=0.98, trim=0 0 0 1cm, clip]{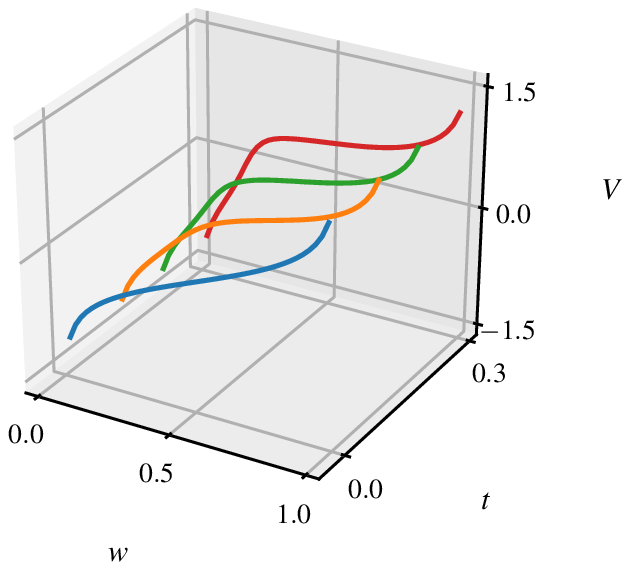}
	\includegraphics[scale=0.98, trim=0 0 0 1cm, clip]{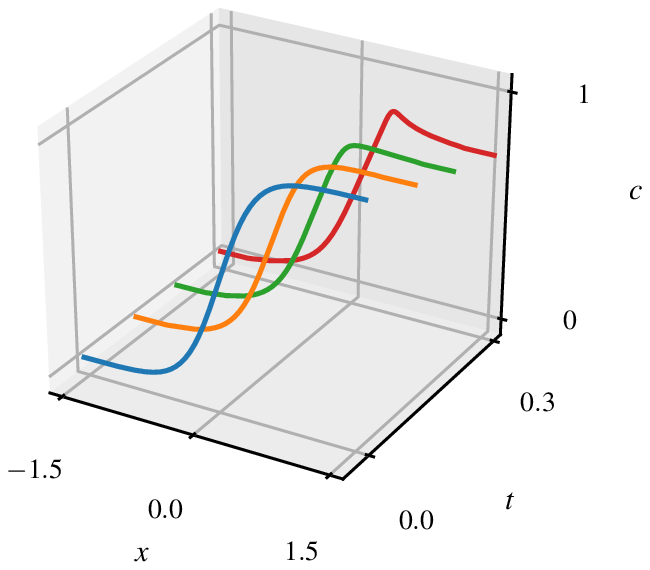}
	\caption{Numerical results (cell concentration, inverse cumulative function, and chemo-attractant density in space and time) for the parabolic-parabolic KS model \eqref{eq:PKS_gf} in the \enquote{peak movement} experiment. The movement and aggregation is accurately resolved using $M=45$ grid points.}\label{fig:movement}
\end{figure}

\subsubsection*{Peak splitting}
In the next test we use the parameters $D_\rho =D_c = 0.1,~\chi = 5,~\alpha = \beta = 1$ and the computational domain $(a,b)$ with boundaries chosen as in the \enquote{peak movement} experiment. The initial chemo-attractant density though is replaced by the function
\[c_0(x) = 1 - \e^{-20\, x^2}, \quad x \in (a,b).\]
Figure \ref{fig:splitting} shows the computational results for the discretization parameters $M=90$ on the mass space mesh and $N=450$ on the Finite Element mesh. The cells move out of the center of the domain on which the most part of the attracting chemical is already consumed. The symmetrical movement to both sides leads to a splitting of the initial concentration into two peaks. The discretization grid for the cells on the density level concentrates its grid points on the locations of both peaks and adapts to the solution over time.

\begin{figure}	
	\centering
	\includegraphics[trim=0 0.5cm 0 1cm, clip]{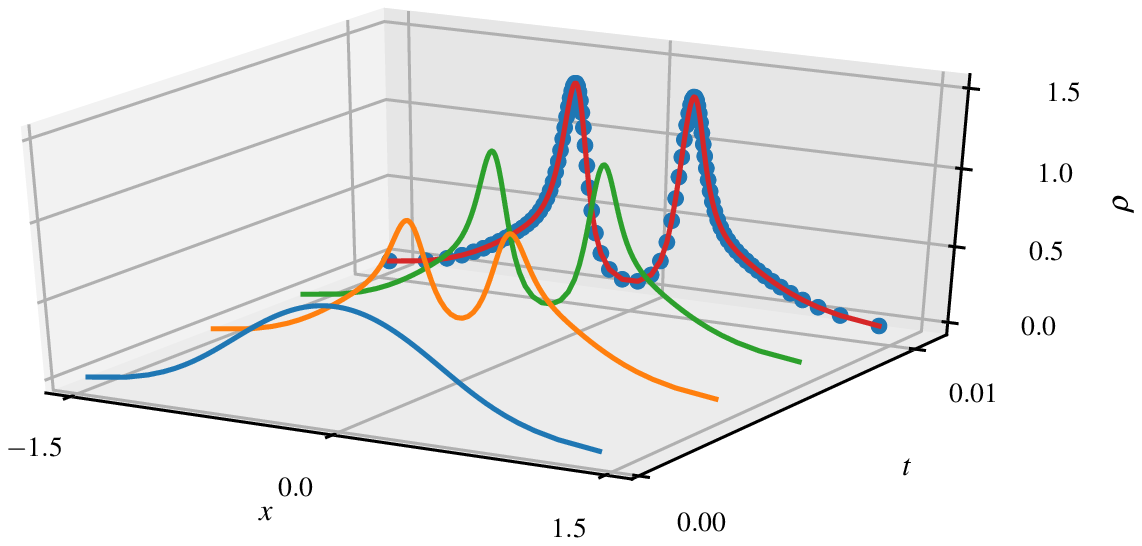}
	\includegraphics[scale=0.98, trim=0 0 0 1cm, clip]{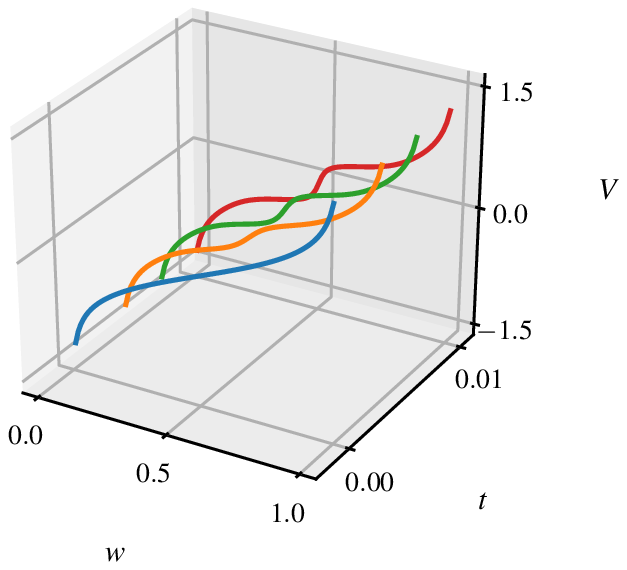}
	\includegraphics[scale=0.98, trim=0 0 0 1cm, clip]{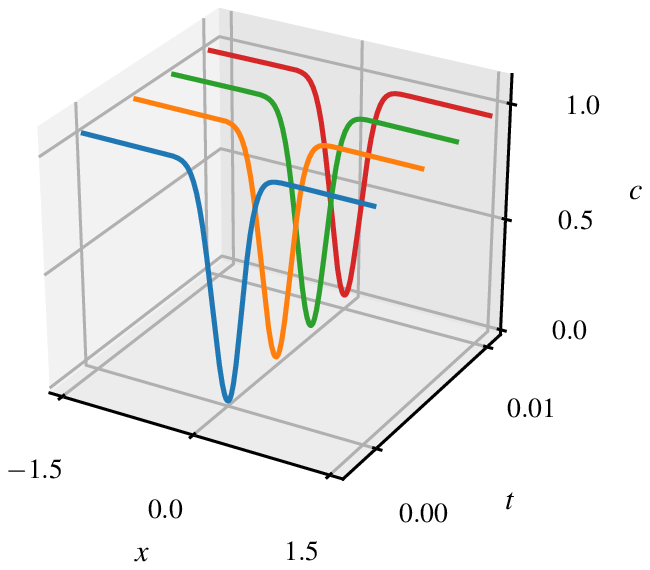}
	\caption{Numerical results (cell concentration, inverse cumulative function, and  chemo-attractant density in space and time) for the parabolic-parabolic KS model \eqref{eq:PKS_gf} in the \enquote{peak splitting} experiment. At the final time we show the approximated cell averages of the density $\rho$ at their respective position on the grid (top). The grid for the cell density adapts to the two splitting peaks.}\label{fig:splitting}
\end{figure}

In the setting of the present experiment we study the convergence of the introduced splitting scheme experimentally. For a fixed instance in time and given $M$, let $V^h_i,~i=1,\dots,M-1$ denote a numerical solution corresponding to the mesh discretization parameter $M$. Then we define the approximate $L^1$ finite difference error by
\begin{equation}\label{eq:fd_error}
E^V_M = \frac{1}{M} \sum_{j= 1}^{M-1}  \left| V^{h}_j - V^{h/2}_{2j} \right|,
\end{equation}
where we have used a numerical solution on a finer mesh with $2M$ points, $V^{h/2}_j,~j=1,\dots,2M-1$, as the reference solution. The \emph{experimental order of convergence} (EOC) for the discretization error in $V_h$ can now be defined by
\begin{equation}\label{eq:eoc_v}
EOC^V(M) = log_2(E^V_M) - \log_2(E^V_{M/2})
\end{equation}
for any even integer $M$. Similarly, we define the EOC for the cell densities on their non-uniform mesh. To this end let $\rho^h_i,~i=1,\dots,M$ denote the finite volume representation corresponding to $V^h_i,~i=1,\dots,M-1$ and let $E^\rho(M)$ denote the discrete $L^1$ error using as reference $\rho^\text{ref}_i,~i=1,\dots,2M$ the finite volume representation of $V^{h/2}_j,~j=1,\dots,2M-1$. This $L^1$ error is computed by projecting the reference solution to the coarser non-uniform grid corresponding to the cell densities $\rho^h_i,~i=1,\dots,M$. Then we define for even integers $M$ analog to \eqref{eq:eoc_v}
\[EOC^\rho(M) = log_2(E^\rho_M) - \log_2(E^\rho_{M/2}).\]

\begin{table}
	\centering	
	\begin{tabular}{r | r r |r r}
		\Tstrut \Bstrut $M$/$M\rfr$& error $E^V_M$ & $EOC^V$ & error $E^\rho_M$ & $EOC^\rho$ \\ \hline
		20 / 40 &  7.231e-04 &  & 1.360e-02 &   \Tstrut\\
		40 / 80 &  7.427e-05 & 3.283 & 1.719e-03 & 2.984 \\
		80 / 160 &  1.416e-05 & 2.391 & 2.768e-04 & 2.635 \\
		160 / 320 &  4.698e-06 & 1.592 & 7.085e-05 & 1.966 \\
		320 / 640 &  1.615e-06 & 1.541 & 2.591e-05 & 1.451 \\
		%640 / 1280 &  6.919e-07 & 1.223 & 1.554e-05 & 0.738 \\
	\end{tabular}\vspace{2ex}
	\caption{Mesh convergence in the peak splitting experiment up to $T = 0.01$ with respect to the discretization parameter $M$. We have adapted the number of points on the Finite Element mesh by $N= M$. The EOCs approach two in the inverse distributions and in the corresponding densities. Yet, for large $M$ the EOC drops which is probably due to limitations by the Finite Element mesh.} \label{tbl:EOC_peak_splitting}
\end{table}
In Table \ref{tbl:EOC_peak_splitting} we exhibit the errors and the EOCs computed at the final time $T=0.01$ and with constant time increment $\Delta t = 10^{-4}$ when doubling the mesh resolution on the mass space mesh iteratively. We have coupled the resolution of the Finite Element mesh to the number of points for the inverse distribution by using $N= M$. We can clearly see that the method converges as the mesh size is refined. The EOCs indicate a convergence order of two in both, the inverse distributions and the densities. However, we see that the EOC decreases as the grids  becomes very fine. We suppose that this is caused by the finite element mesh that is only uniformly but not locally refined: as $M$ increases the number of mesh cells on the non-uniform finite volume mesh for the cell densities aggregates around the positions of the peaks. Throughout the computation the finite element solution $c_h$ must in turn be interpolated in many points in a small physical area which leads to a loss of accuracy as the number $\max_i|\{j: x_i\leq V_j\leq x_{i+1}\}|$ increases. Nevertheless, Table \ref{tbl:EOC_peak_splitting} demonstrates that the method has provided accurate numerical results using only a few mesh points.

\subsection{A cancer invasion system}\label{sec:cancer1}
In this test case we address a model of cancer invasion of the extracellular matrix (ECM), the first step in cancer metastasis. The macroscopic modeling of this process commonly uses an Keller-Segel approach that models the densities of the cancer cells, the concentration of the extracellular fibers on which cancer cells adhere and move, and the density of an enzyme of the matrix metallopreteinases (MMPs) family that is produced by the cancer cells and is responsible for the degradation of the ECM.

There is a wide variety of cancer invasion models in the literature, see e.g. \cite{Chaplain.2005, EMT_paper,  Preziosi.2003, Stinner.2015, Johnston.2010}. In order to test our scheme we employ a simple test case based on the pioneering model \cite{Anderson.2000} augmented with a proliferation term in the cancer cell density equation which reads

\begin{equation}\label{eq:splitting_invasion}\left\{
\begin{aligned}
\ddt \rho &= \ddx \( D_\rho \ddx \rho - \chi  \rho \ddx v\)+ \mu \rho(1-\rho)&\text{in }(0,\infty) \times (a,b), \\
\ddt v &= - \delta v m &\text{in }(0,\infty) \times (a,b), \\
\ddt m &= D_m \ddx^2 m + \alpha \rho -  \beta m &\text{in }(0,\infty) \times (a,b)\\
	\ddx \rho(\cdot, r) &= \ddx m(\cdot, r)=0, &r\in\{a,b\},\\
\rho(0,\cdot) &= \rho_0 \geq0,\quad v(0,\cdot)=v_0, \quad m(0,\cdot)= m_0. &
\end{aligned}\right.
\end{equation}

 In this model the cancer cells with the density $\rho$ move using their motility apparatus with a preferred direction towards higher concentrations of the ECM with concentration denoted by $v$. This is the haptotaxis phenomenon. Being a network in a static equilibrium the ECM does not translocate. The MMPs however, whose density we denote by $m$, diffuse freely in the extracellular environment. Additionally, the cancer cells proliferate towards a preferred density $\rho=1$ and they produce MMPs with a constant rate. The MMPs attach to the ECM which they dissolve upon contact.

\begin{figure}
		\centering
\includegraphics[scale=0.95, trim=0 0 0 1cm, clip]{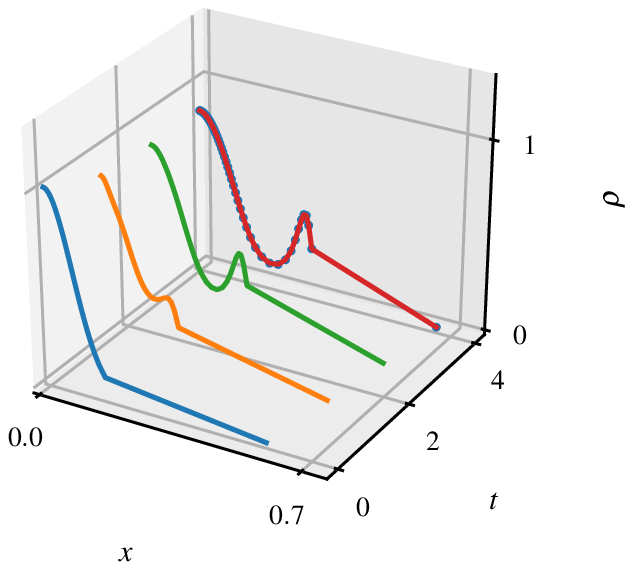}  \includegraphics[scale=0.95, trim=0 0 0 1cm, clip]{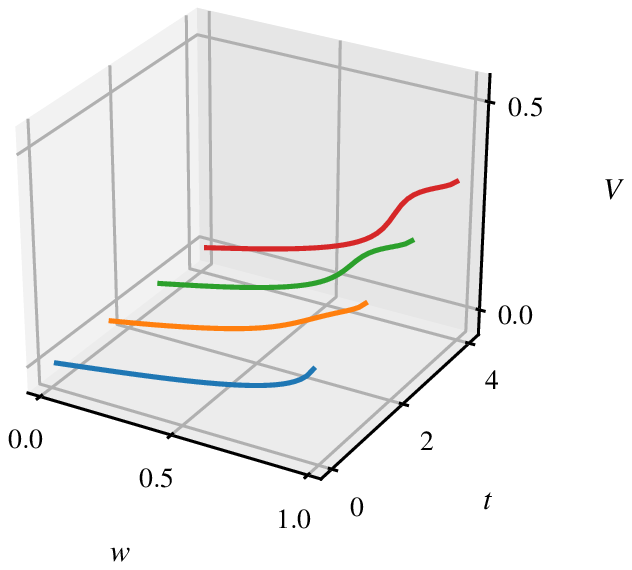}
\includegraphics[scale=0.95, trim=0 0 0 1cm, clip]{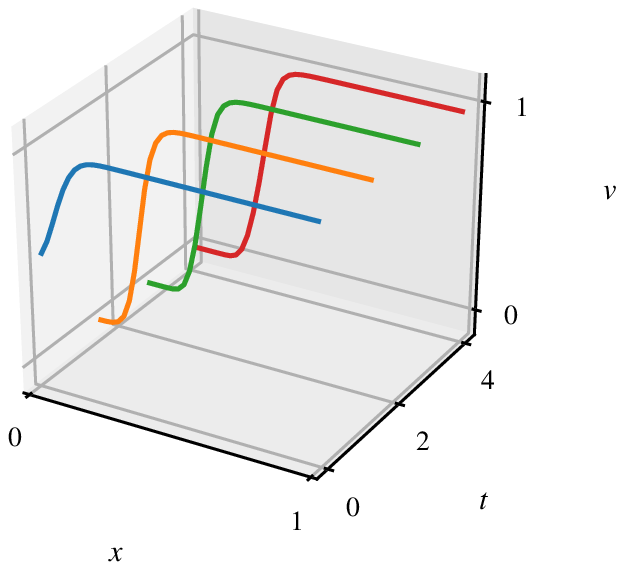}  \includegraphics[scale=0.95, trim=0 0 0 1cm, clip]{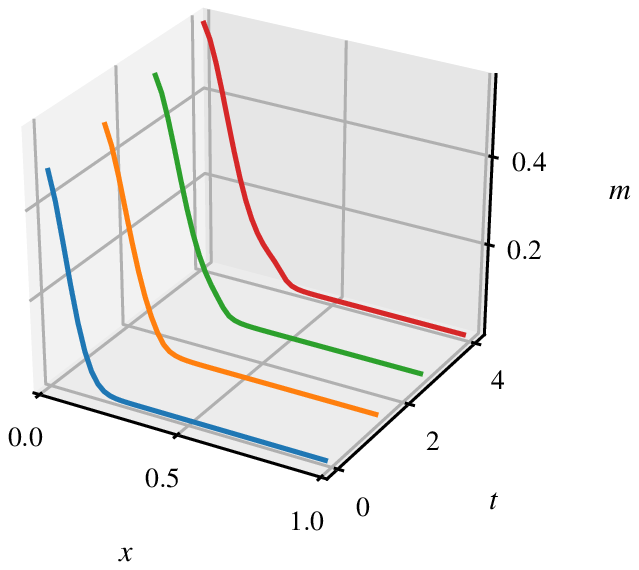}
	\caption{Numerical results (cell concentration, inverse cumulative function, tissue and MMP density) for the cancer invasion model \eqref{eq:splitting_invasion} using only $N=M=45$ mesh points.  At the final time we show the approximated cell averages of the tumor density $\rho$ at their respective position on the grid (top left). A high concentration of tumor cells emerges and invades the tissue. The grid for the tumor density omits the part of the tissue that is not yet invaded.}\label{fig:gradflow_cancer}
\end{figure}

For a numerical experiment with this model we consider the computational domain $(0,1)$ with initial conditions
\begin{equation}
		\rho_0(x) =  \e^{-x^2 / \varepsilon}, \quad
		v_0(x) = 1- 0.5\, \e^{-x^2 / \varepsilon},\quad
		m_0(x) = 0.5 \, \e^{-x^2 / \varepsilon},
\end{equation}
where we use $\varepsilon = 10^{-2}$. Moreover we employ the parameter values $D_c = 2 \cdot 10^{-4}, ~ \chi = 5 \cdot 10^{-3},~\mu = 0.2,~D_a= 10^{-3}, ~\delta = 10, ~\alpha = 0.1,$ and $\beta = 0$. We apply the splitting scheme \eqref{eq:full_strang} using meshes of $M=N=45$ points and the CFL condition \eqref{eq:splschemecfl}.

In our method we discretize both the ECM density $v$ and the MMP concentration $m$ on the same finite element basis. The corresponding approximations are updated in the reaction-diffusion operator of the splitting method. The interpolations are only needed with respect to the ECM density $v$. We resolve the migration of the cancer cells in transformed variables with the advection-diffusion operator and the cell proliferation in original variables with the reaction-diffusion operator.

The considered numerical experiment simulates the propagation of cancer cells into the ECM on the right side of the computational domain. To account for the corresponding temporal expansion of the support of the cancer cell density $c$ we have adapted the treatment of the right boundary. In more details, we have neglected the discrete cancer cell density entry adjacent to the right boundary in the proliferation update \eqref{eq:reac_update}, i.e. $S_{\Delta t}\rho_M(t) = \rho_M(t) $. Though we have not excluded the corresponding boundary entry in the cumulative function, $V_{M-1}$, from the diffusion and haptotaxis updates of the scheme.

We present the according numerical results in Figure \ref{fig:gradflow_cancer}. Apart from the propagation of the cells into the tissue, we observe a build up of cancer cells at the leading front of the tumor. Degradation of the tissue and MMP production are also visible. Throughout the computation the not invaded part of the tissue is resolved by a single grid cell in the cancer cell density.

\begin{table}
	\centering	
	\begin{tabular}{r | r r }
		\Tstrut \Bstrut	$\Delta t$/$\Delta t\rfr$& error $E^V_{\Delta t}$ & $EOC^t$  \\ \hline
		\Tstrut	0.1/ 0.05 &  2.244e-04 & \\
		0.05/ 0.025 &  4.728e-05 & 2.247   \\
		0.025 / 0.0125 &  1.107e-05 & 2.094   \\
		0.0125 / 0.00625 &  2.766e-06 & 2.001  \\
	\end{tabular}\hspace{2em}
	\begin{tabular}{r | r r}
		\Tstrut \Bstrut	$M$/$M\rfr$& error $E^V_M$ & $EOC^V$  \\ \hline
		\Tstrut 10 / 20 & 7.867e-03 & \\
		20 / 40 &  1.919e-03 & 2.035 \\
		40 / 80 &  4.475e-04 & 2.100\\
		80 / 160 &  1.514e-04 & 1.563 \\
	\end{tabular}\vspace{2ex}
	\caption{Experimental convergence in time (left) and in space (right) in the numerical experiment with system \eqref{eq:splitting_invasion} at $T = 1$. In all computations we have set $N= M$. The EOCs suggest a convergence of second order in time and space.} \label{tbl:EOC_invasion_gf}
\end{table}
In this experiment we have also studied the convergence of the scheme experimentally. Along with the errors in space, we have also computed the errors in time by the formula
\begin{equation}\label{eq:fd_error_time}
E^V_{\Delta t} = \frac{1}{M} \sum_{j= 1}^{M-1}  \left| V^{h,\,\Delta t}_j - V^{h,\,2\Delta t}_{j} \right|,
\end{equation}
where $V^{h,\,\Delta t}_i,~i=1,\dots,M-1$ denotes a numerical solution computed on $M$ mesh points with constant time increment $\Delta t$. For the computation of the temporal errors we have considered a fine spatial resolution with $M=N=600$ mesh cells. The corresponding EOC is given by $EOC^t=log_2(E^V_{\Delta t}) - \log_2(E^V_{2\Delta t})$. The spatial errors and EOCs are computed according to \eqref{eq:fd_error} and \eqref{eq:eoc_v} with constant time increment $\Delta t = 2 \times 10^{-4}$ and coupled $N=M$. Both, temporal and spatial errors have been computed at the final time $T=1$.

In Table \ref{tbl:EOC_invasion_gf} we present the computed errors and EOCs in the invasion experiment. We see that the method converges as either the mesh size or the time increment is refined. The EOCs in time and space range around two which confirms our expected second order. As in the \enquote{peak splitting} experiment, the EOC decreases slowly as the mesh is refined to very high resolutions. We point out that previous numerical tests which did not employ our proposed boundary treatment have yield only a spatial EOC of one.

\subsection{The uPA model}\label{sec:uPA}
In the last series of experiments we apply our scheme to a detailed tumor invasion system derived in \cite{Chaplain.2005}. This model focuses on the enzymatic \emph{urokinase plasminogen activator} (uPA) system which is known to play an essential role in the context of cancer progression and metastasis.
The uPA is an extracellular serine protease which is responsible for the activation of the protease plasmin.
This activation occurs mainly if uPA is bound to its uPA receptors (uPAR) on the cancer cell membrane. The receptor bound uPA enhances the affinity of uPAR to the ECM constituent vitronectin \cite{Wei.1994} and integrins. Thus, the uPA/uPAR-complex regulates indirectly also the vitronectin-integrin interactions. Both proteases plasmin and uPA catalyze the degradation of vitronectin and other ECM components. Another actor in the system is the plasminogen activator inhibitor type 1 (PAI-1) which is produced by the tumor cells and limits the activation of plasmin to prevent tissue damage and to maintain homeostasis.

The considered model complements the system \eqref{eq:splitting_invasion} by chemotactic movement of the cells due to uPA and PAI-1, remodeling of the ECM modeled by a logistic term and the dynamics of the uPA system modeled in terms of mass-action kinetics. We refer to \cite{Chaplain.2005} for more details. The full model reads
	\begin{equation}\label{eq:uPA}
\left\{
\begin{aligned}
\ddt \rho &= \ddx \( D_\rho \ddx \rho\right. \left. - \chi_u \rho\ddx u - \chi_p \rho \ddx p - \chi_v \rho\ddx v\) +\mu_1\rho(1-\rho) &\text{in }(0,\infty) \times (a,b),\\
\ddt v & = - \delta vm + \phi_{21}up - \phi_{22}vp + \mu_2v(1-v) &\text{in }(0,\infty) \times (a,b),\\
\ddt u &= D_u \ddx^2 u -\phi_{31}up - \phi_{33}\rho u + \alpha_3 \rho &\text{in }(0,\infty) \times (a,b),\\
\ddt p &= D_p \ddx^2 p - \phi_{41}up - \phi_{42}vp + \alpha_{4}m &\text{in }(0,\infty) \times (a,b),\\
\ddt m &= D_m \ddx^2 m  + \phi_{52} vp + \phi_{53}\rho u - \alpha_{5}m &\text{in }(0,\infty) \times (a,b),\\
	\ddx \rho(\cdot, r) &= \ddx u(\cdot, r)= \ddx p(\cdot, r)=\ddx m(\cdot, r)=0, &r\in\{a,b\},\\
\rho(0,\cdot) &= \rho_0,\quad v(0,\cdot)=v_0 ,  u(0,\cdot)= u_0\quad p(0,\cdot)= p_0, \quad m(0,\cdot)= m_0, &
\end{aligned}
\right.
\end{equation}
where the cancer cell concentration is represented by $\rho$, the ECM by the density of its constituent vitronectin $v$, and uPA, PAI-1, and plasmin densities are denoted by $u$, $p$, and $m$. We assume non-negative initial data.

We consider a numerical experiment that we have studied in \cite{Urokinase_paper} by a Finite Volume method. It employs the parameter values from \cite{Andasari.2011} given by
\begin{equation*}
\begin{array}{lll}
D_c = 3.5 \times 10^{-4},	& 	\chi_u = 3.05\times 10^{-2},		& 	\mu_1 = 0.25,	\\
D_u = 2.5\times  10^{-3},		& 	\chi_p=3.75\times  10^{-2},		&	\mu_2=0.15,		\\
D_p=3.5\times  10^{-3}, 		&	\chi_v = 2.85\times  10^{-2},		&	\delta=8.15,	\\
D_m=4.91\times  10^{-3},		&			\phi_{21}=0.75,	&	\phi_{22}= 0.55,\\	
\phi_{31}=0.75, 			&	\phi_{33}=0.3,					&	\phi_{41}=0.75, \\
\phi_{42}=0.55, &				\phi_{52}=0.11,					&	\phi_{53}=0.75, \\
\alpha_3 = 0.215, & 	\alpha_4 = 0.5, & \alpha_5=0.5,
\end{array}
\end{equation*}
and the computational domain $I=(0,10)$ with the initial date
\begin{align*}
	c_0(x) &= \e^{-x^2/\eps},
	&& v_0(x)= 1- \frac 1 2 \e^{-x^2/\eps},
	&&u_0(x) = \frac 1 2\e^{-x^2/\eps},\\
	p_0(x) &= \frac {1}{20} \e^{-x^2/\eps}, &&m_0(x) = 0, &&\varepsilon= 5 \times 10^{-3}.
\end{align*}

As done to treat the model \eqref{eq:splitting_invasion} we use a single finite element basis to discretize the concentrations of the ECM, the uPA, the PAI-1, and the plasmin. The cubic spline in the advection-diffusion operator interpolates the linear combination $\chi_v v+ \chi_u u + \chi_p p$. Similar as in the models \eqref{eq:PKS_gf} and \eqref{eq:splitting_invasion} the scheme approximates the cell proliferation in Eulerian coordinates but diffusion and advection of the cancer cells in transformed variables. We have used the same boundary treatment as in Section \ref{sec:cancer1}.

\begin{figure}[!t]
	\centering
	\includegraphics[trim=0 0.5cm 0 0.5cm, clip]{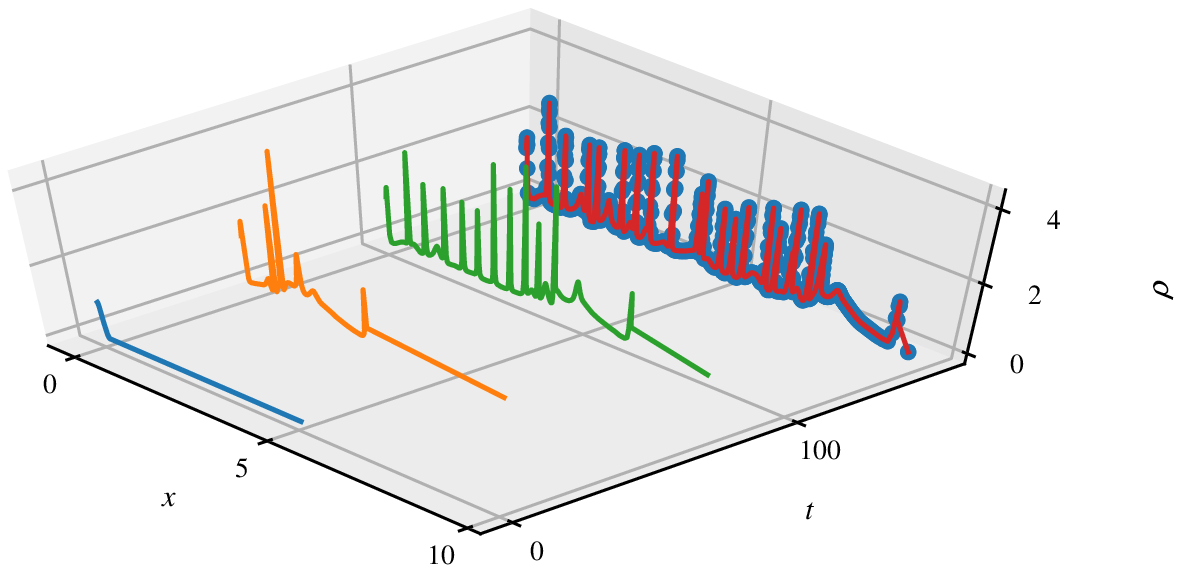}
	\includegraphics[scale=0.98, trim=0 0 0 1cm, clip]{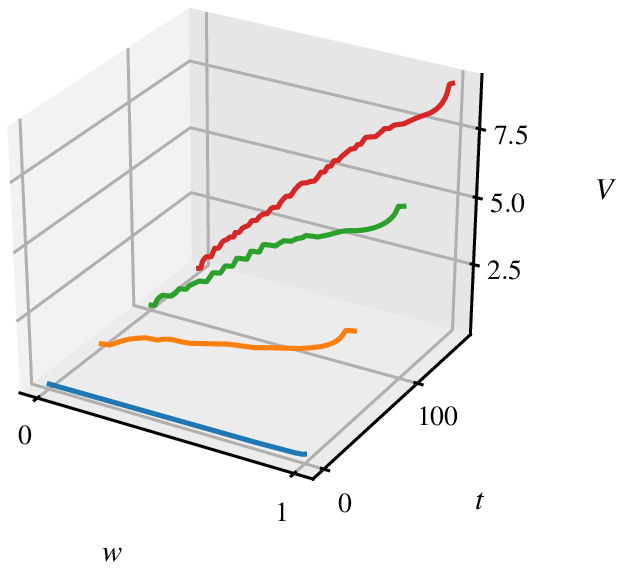}
	\includegraphics[scale=0.98, trim=0 0 0 1cm, clip]{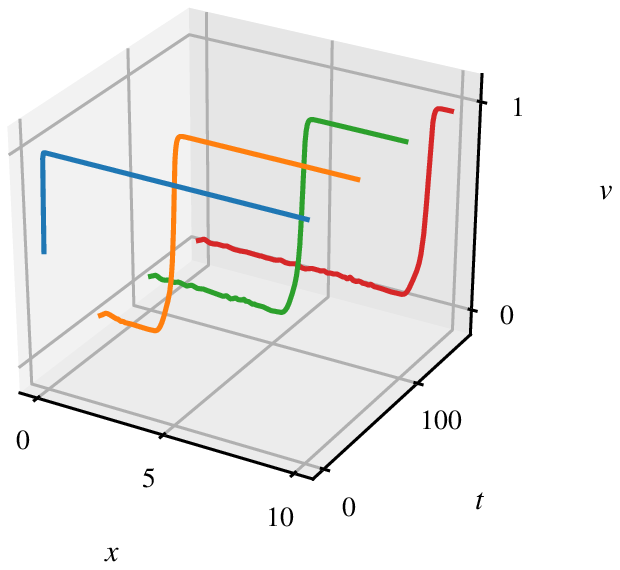}
	\includegraphics[scale=0.98, trim=0 0.2cm 0 0.5cm, clip]{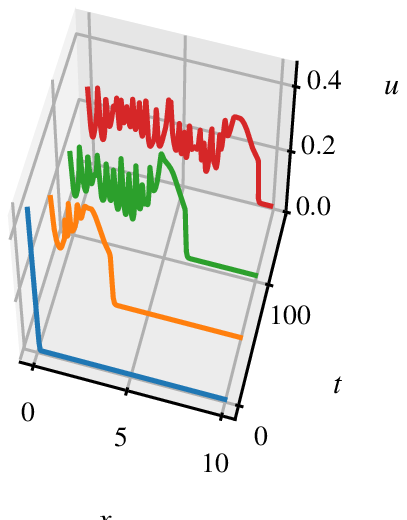}
	\includegraphics[scale=0.98, trim=0 0.2cm 0 0.5cm, clip]{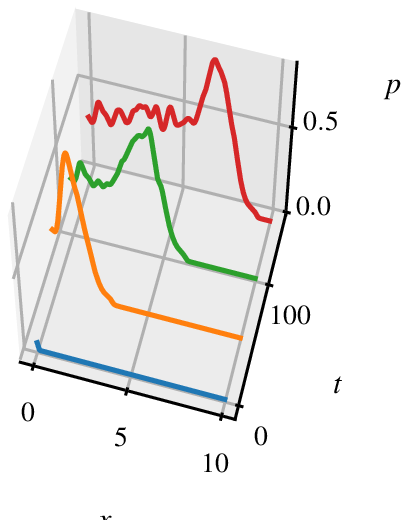}
	\includegraphics[scale=0.98, trim=0 0.2cm 0 0.5cm, clip]{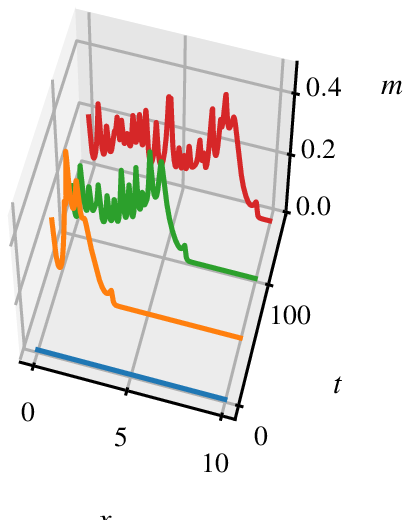}
	\caption{Numerical results (cancer cell concentration, inverse cumulative function, ECM, uPA, PAI-1 and plasmin density in space and time) in an numerical experiment with the model \eqref{eq:uPA} computed by the new scheme. The dynamics, particularly the steep peaks in the cancer cell density, are well resolved by the scheme. We have used $M=N=400$ grid points on both meshes in the numerical simulation.}\label{fig:gradflow_chaplol}
\end{figure}

In Figure \ref{fig:gradflow_chaplol} we present the simulation results obtained by our scheme with mesh parameters $M=N=400$. The method is capable to approximate accurately the dynamics that we have obtained in \cite{Urokinase_paper} including the emergence and movement of multiple steep peeks. The present simulation clearly demonstrates the robustness of the newly developed scheme to simulate complex taxis-diffusion systems arising in cell biology.

To investigate the dynamics of such a cancer invasion system in the case that the cell migration is restricted by the occupied extracellular space we have endowed the model \eqref{eq:uPA} with the volume filling approach \eqref{eq:vol_filling}. In more details we have replaced the evolution equation for the tumor cell density in \eqref{eq:uPA} by
\begin{equation}\label{eq:uPA_vol_filling}
	\ddt \rho = \ddx \( D_\rho (1 + (\gamma -1)\rho^\gamma)\ddx \rho -  (1-\rho^\gamma)(\chi_u \rho\ddx u + \chi_p \rho \ddx p + \chi_v \rho\ddx v)\) +\mu_1\rho(1-\rho)
\end{equation}
and resolved the same numerical experiment as above. To this end the scheme has been adapted in a similar way as in \eqref{eq:vol_filling_scheme}.
\begin{figure}[!t]
	\centering
	\includegraphics[trim=0 0.5 0 1cm, clip]{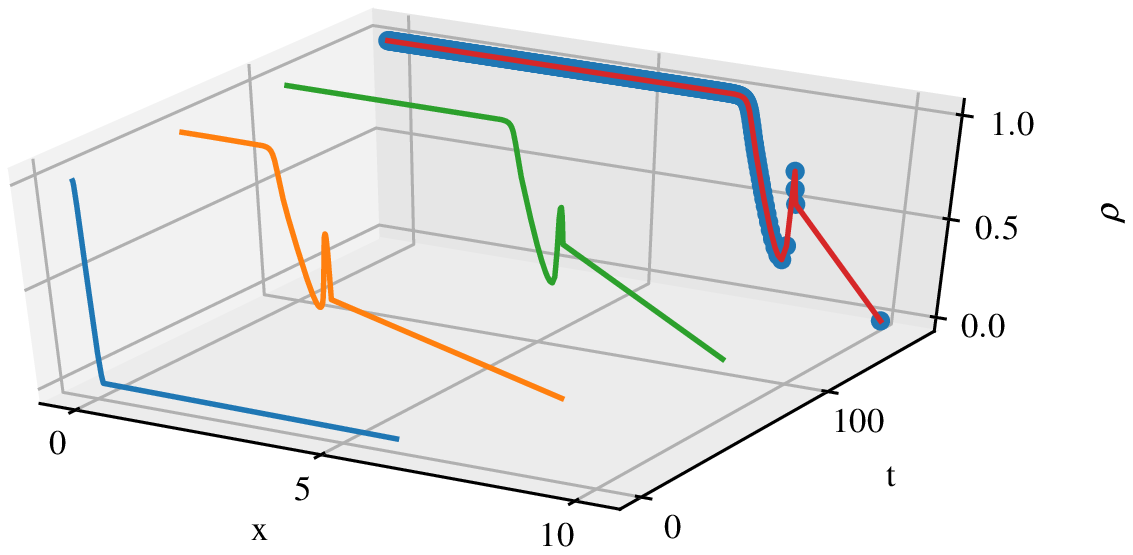}
	\includegraphics[scale=0.98, trim=0 0 0 1.2cm, clip]{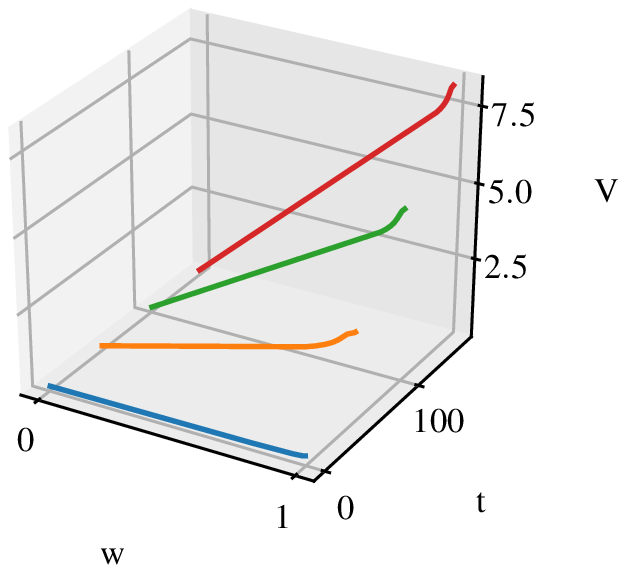}
	\includegraphics[scale=0.98, trim=0 0 0 1.2cm, clip]{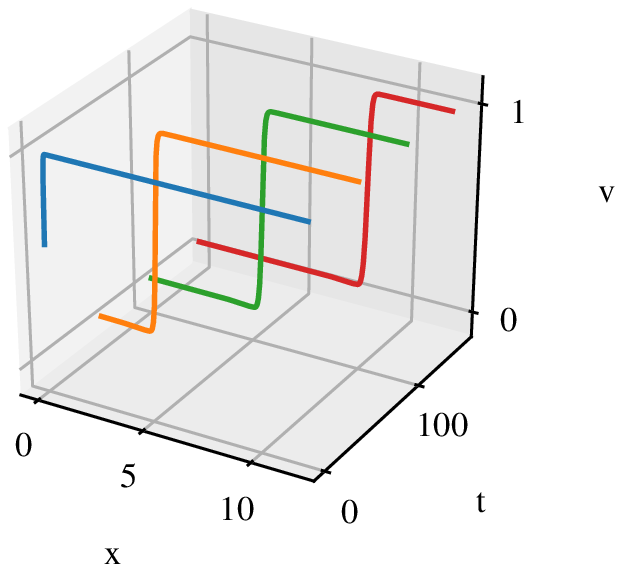}
	\includegraphics[scale=0.98, trim=0 0.2cm 0 0.5cm, clip]{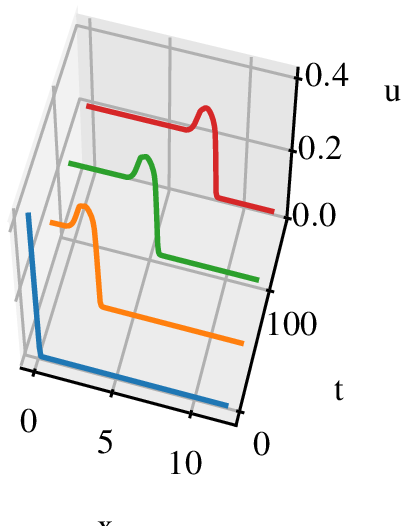}
	\includegraphics[scale=0.98, trim=0 0.2cm 0 0.5cm, clip]{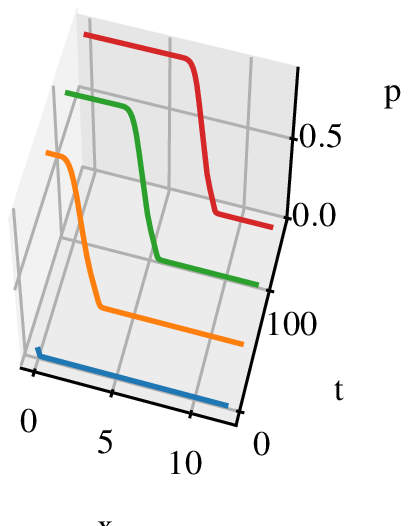}
	\includegraphics[scale=0.98, trim=0 0.2cm 0 0.5cm, clip]{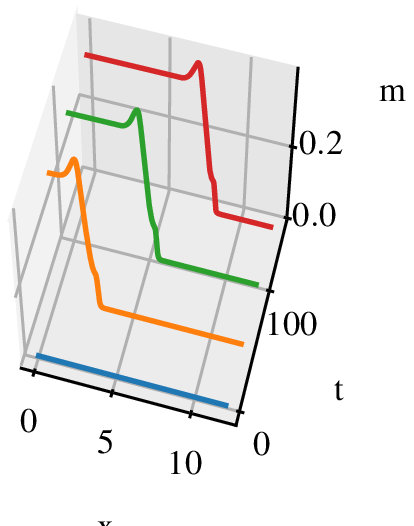}
	\caption{Numerical results (cancer cell concentration, inverse cumulative function, ECM, uPA, PAI-1 and plasmin density in space and time) in the model \eqref{eq:uPA} with volume filling by \eqref{eq:uPA_vol_filling} and exponent $\gamma=2$. We have used $M=N=400$ grid points on both meshes in the numerical simulation.}\label{fig:chaplol_vf2}
\end{figure}
\begin{figure}[!t]
	\centering
	\includegraphics[trim=0 0.5 0 1cm, clip]{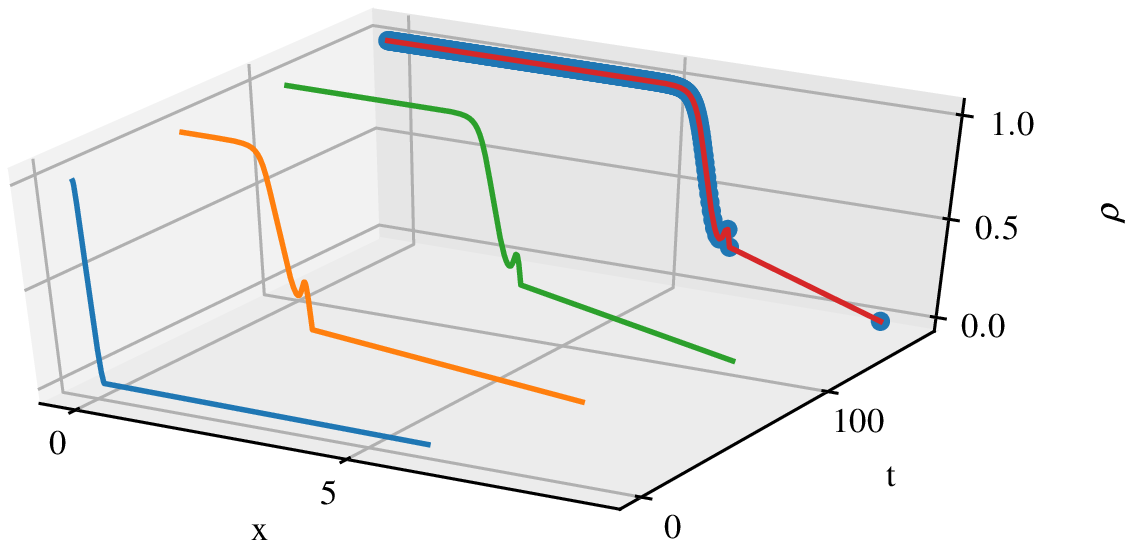}
	\includegraphics[scale=0.98, trim=0 0 0 1.2cm, clip]{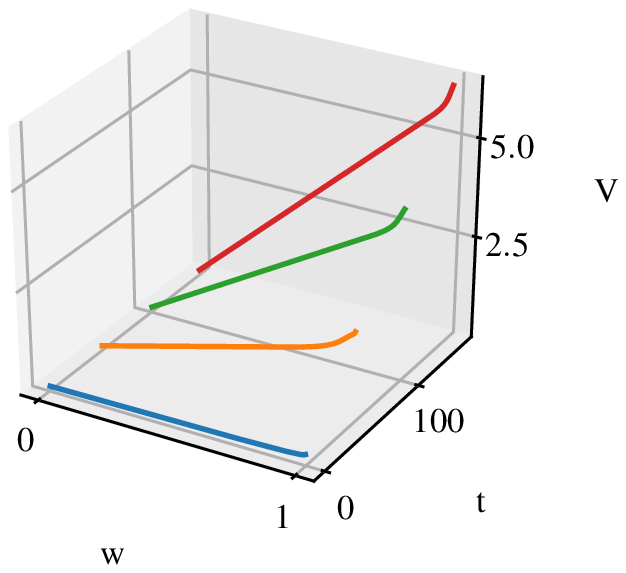}
	\includegraphics[scale=0.98, trim=0 0 0 1.2cm, clip]{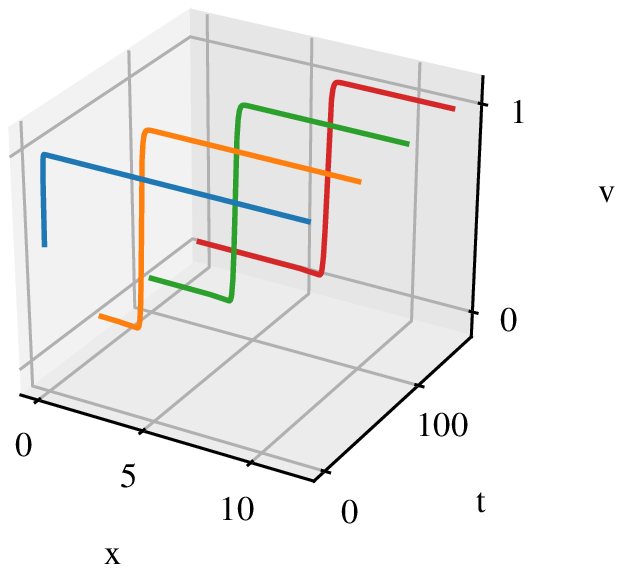}
	\includegraphics[scale=0.98, trim=0 0.2cm 0 0.5cm, clip]{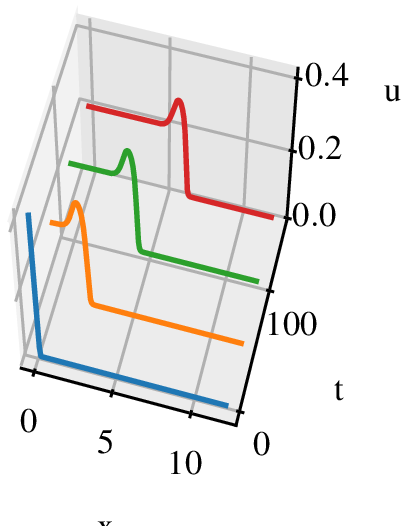}
	\includegraphics[scale=0.98, trim=0 0.2cm 0 0.5cm, clip]{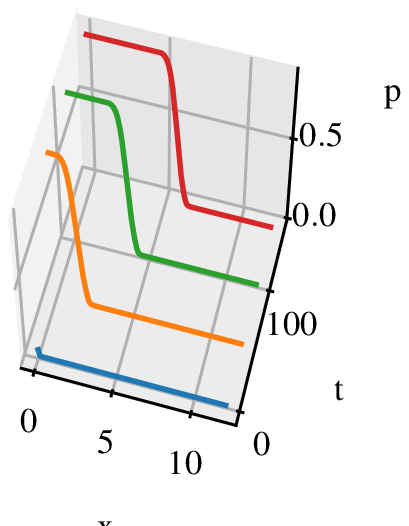}
	\includegraphics[scale=0.98, trim=0 0.2cm 0 0.5cm, clip]{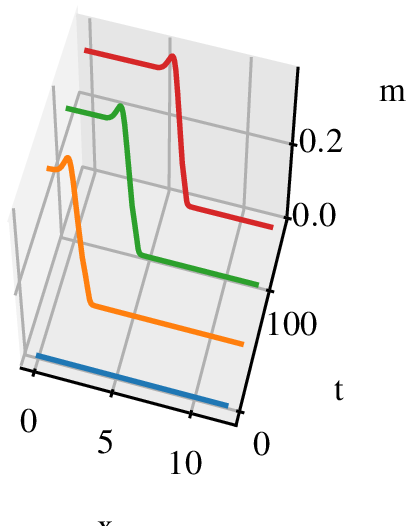}
	\caption{Numerical results (cancer cell concentration, inverse cumulative function, ECM, uPA, PAI-1 and plasmin density in space and time) in the model \eqref{eq:uPA} with volume filling by \eqref{eq:uPA_vol_filling} and exponent $\gamma=0.5$. We have used $M=N=400$ grid points on both meshes in the numerical simulation.}\label{fig:chaplol_vf0_5}
\end{figure}

In Figures \ref{fig:chaplol_vf2} and \ref{fig:chaplol_vf0_5} we show the simulation results for the exponents chosen $\gamma=2$ and $\gamma=0.5$, where we have used $M=N=400$ mesh points in the computation. Contrary to the simulations without volume filling, the cancer cells do not exhibit the rich dynamics, i.e. the formation of multiple clusters. Instead a single concentration of tumor cells invades the ECM and leaves a homogeneous distribution of tumor cells of maximal density $\rho=1$ behind. Reducing the diffusivity of the cells by decreasing the exponent $\gamma$ results in a slower invasion of the tissue and to a lower concentration at the invading front of tumor cells. This can be seen when comparing Figure \ref{fig:chaplol_vf2} ($\gamma=2$) and Figure \ref{fig:chaplol_vf0_5} ($\gamma=0.5$).

To study how the new method compares in efficiency to more conventional numerical methods we consider again the above experiment without volume filling. For the comparison we consider the Finite Volume/Finite Difference from \cite{Urokinase_paper} for both uniform and adaptive meshes. In particular we have chosen a second order method with implicit-explicit Strang operator splitting. For the adaptive mesh refinement (AMR) method we have chosen the gradient monitor function to determine the mesh-cells to be either refined or coarsened\footnote{In more details we have used the refinement and coarsening threshold values $\theta_\text{ref} = 10,~\theta_\text{coars} = 2.5$, a single refinement and coarsening operation per time step $n_\text{ref}=n_\text{coars}=1$ and a maximal refinement level of $l_\text{max}=2$, cf \cite{Urokinase_paper}.}. For brevity, we will refer to the adaptive method as AMR and to the uniform method as FVFD. The new mass-transport/finite element method will be denoted by MTFE.

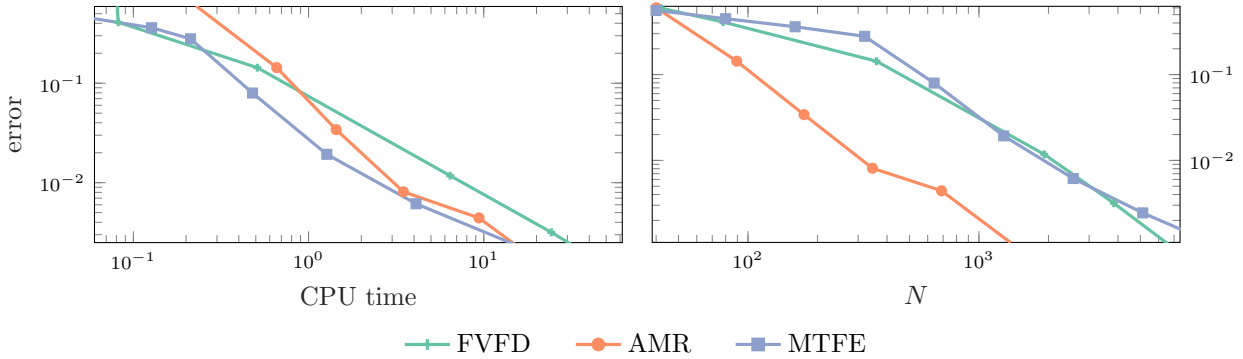
\begin{figure}
	\centering
	\setlength\figureheight{0.14\textheight}
	\setlength\figurewidth{.85\textwidth}
	% This file was created by matlab2tikz.
%
%The latest updates can be retrieved from
%  http://www.mathworks.com/matlabcentral/fileexchange/22022-matlab2tikz-matlab2tikz
%where you can also make suggestions and rate matlab2tikz.
%
%\tikzsetnextfilename{mtfe_vs_amr_fig}
\definecolor{mycolor1}{rgb}{0.85000,0.33000,0.10000}%
\definecolor{mycolor2}{rgb}{0.00000,0.45000,0.74000}%
\begin{tikzpicture}
\begin{groupplot}[
/tikz/mark size=1.5pt,
group style={
	group name=my plots,
	group size=2 by 1,
	horizontal sep=4mm,      % <-- default: 1cm
	%vertical sep=0.5cm,        % <-- default: 1cm
},
scale only axis,
xmode=log,
xminorticks=true,
xlabel style={font=\color{white!15!black}},
ymode=log,
width= .5 \figurewidth,
height= \figureheight,
ticklabel style = {font=\scriptsize},
yminorticks=true,
ylabel style={font=\color{white!15!black}},
axis background/.style={fill=white},
]
\nextgroupplot[%
xmin=0.06,
xmax=61.76,
xlabel={CPU time},
ymin=0.0025,
ymax=0.59,
ylabel={error},
legend to name = {complegend}, legend columns=3, legend style={/tikz/every even column/.append style={column sep=0.5cm}, draw=none}
]
\coordinate (c2) at (rel axis cs:1,1);
\addplot [index of colormap=0 of Set2, mark=+, line width=1.2pt]
table[row sep=crcr]{%
	0.0810149999999999	0.595096740090057\\
	0.081945	0.409912313794213\\
	0.50991	0.143247247126005\\
	6.428431	0.0117598980920974\\
	24.302709	0.00317187068113558\\
	54.9617194974483	0.00133280631028635\\
};
\addlegendentry{FVFD}

\addplot [index of colormap=1 of Set2, mark=*, line width=1.2pt]
table[row sep=crcr]{%
	0.227127	0.600697757360212\\
	0.658041	0.143407533858166\\
	1.436811	0.0341480271689113\\
	3.479309	0.00810267891942928\\
	9.388393	0.00442482418705824\\
	27.508583	0.00109799947823093\\
};
\addlegendentry{AMR}

\addplot [index of colormap=2 of Set2, mark=square*, line width=1.2pt]
  table[row sep=crcr]{%
	0.0236990451812744	0.559003464077995\\
	0.0581839084625244	0.44853613573295\\
	0.126996040344238	0.361207432410256\\
	0.212043046951294	0.27939446296268\\
	0.478769063949585	0.0796779900369774\\
	1.27087187767029	0.0192975331674551\\
	4.10677099227905	0.00614724744937904\\
	14.5441420078278	0.00244535301999092\\
	103.272348165512	0.00108870607392653\\
};
\addlegendentry{MTFE}
\nextgroupplot[%
xmin=38.4816297054039,
xmax=7429.63950759495,
xminorticks=true,
xlabel={$N$},
ymin=0.00110594054081847,
ymax=0.625055192527397,
yticklabel pos=right,
yminorticks=true,
cycle list name=Set2,
legend style={legend cell align=left, align=left, draw=white!15!black}
]
\coordinate (c1) at (rel axis cs:0,1);
\addplot[index of colormap=0 of Set2, mark=+, line width=1.2pt]
table[row sep=crcr]{%
	42	0.595096740090057\\
	78.0000000000001	0.409912313794213\\
	360	0.143247247126005\\
	1919.99999999999	0.0117598980920974\\
	3839.99999999998	0.00317187068113558\\
	7034.52897856029	0.000928063984909919\\
};
\addplot[index of colormap=1 of Set2, mark=*, line width=1.2pt]
table[row sep=crcr]{%
	40	0.600697757360212\\
	89.3744637312648	0.143407533858166\\
	174.773119544118	0.0341480271689113\\
	345.931881447674	0.00810267891942928\\
	688.24129331835	0.00442482418705824\\
	1373.00370358649	0.00109799947823093\\
};
\addplot[index of colormap=2 of Set2, mark=square*, line width=1.2pt]
  table[row sep=crcr]{%
	40	0.559003464077995\\
	80	0.44853613573295\\
	160	0.361207432410255\\
	320	0.27939446296268\\
	640	0.0796779900369773\\
	1280	0.0192975331674551\\
	2560	0.00614724744937904\\
	5120.00000000001	0.00244535301999092\\
	10240	0.00108870607392653\\
};
\end{groupplot}
\coordinate (c3) at ($(c1)!.5!(c2)$);
\node[below] at (c3 |- current bounding box.south){\pgfplotslegendfromname{complegend}};
\end{tikzpicture}%
	\caption{Relation between the CPU time and the error (left) and between the (average) number of cells and the error (right) for the FVFD, AMR, and MTFE scheme in log-log scale in a numerical experiment with the uPA model \eqref{eq:uPA}. The new MTFE method seems to be most efficient in terms of error per CPU time, its relation between the error and the average number of cells is similar as in the FVFD scheme.}
	\label{fig:mtfe_vs_amr}
\end{figure}

For our comparism we consider the set $S= \{ 40, 80, 160, 320, 640, 1280\}$ and run the MTFE method for $M \in S$, the FVFD method for $N=6k$ for any $k \in S$, and the AMR method for $N_0 \in S$ with $N_0$ denoting the number of cells on the lowest level. We couple the two meshes in the MTFE scheme by setting $N= M$. We do not consider finer resolutions due to restrictions by the uniform reference solution in the error computations of solutions obtained by the MTFE scheme. For comparison reasons we let $N$ denote the average number of cells in the AMR method. In addition, all three methods employ the same Courant number $CFT=0.49$ and all numerical solutions are computed on the domain $\Omega = (0,5)$.

We compute the numerical solutions of the considered experiment at the time instance $t=23$ that features two steep peaks in the cancer cell concentration. In this process we measure the CPU time that is needed for the corresponding simulations and compute the error of the approximation at the final time. For the error computation we have used a reference solution that employs a uniform mesh with cell size $h=1.25 \times 10^{-5}$ in the relevant part of the domain\footnote{We have computed a uniform solution in $(0,2)$ with our uniform method using $N=160\,000$ mesh cells.}. The discrete $L^1$ error is then computed with respect to the densities using a suitable projection of the reference solution. 
Note that the following test results are dependent on our (non-reference) implementation of the numerical methods.

We show the results of our comparison in Figure \ref{fig:mtfe_vs_amr}. Here we present the relation between the error and the computation time and the relation between the error and the average number of cells for all three methods. We see that for all tested methods the error decreases as either the cell number or the CPU time increases.
Figure \ref{fig:mtfe_vs_amr} (left) exhibits an advantage of the new MTFE method over the other schemes in efficiency for most of the conducted simulations. This can be seen as the MTFE method achieves in most cases lower errors than the FVFD or the AMR scheme using the same CPU time.
As the runtime increases the MTFE method approaches the efficiency of the AMR method with the new method being at a slight advantage over the mesh refinement method. Clearly, the AMR and the MTFE scheme both outperform the FVFD method for sufficiently large CPU times.

Figure \ref{fig:mtfe_vs_amr} (right) shows that the AMR method achieves the lowest errors when compared with simulations by the FVFD and MTFE scheme employing the same average number of cells. The error of the MTFE scheme has a similar dependence on the number of cells as the error of the FVFD scheme. We conjecture thus that the better efficiency of the MTFE scheme in terms of CPU time seen in Figure \ref{fig:mtfe_vs_amr} (left) is probably caused by the CFL condition in the MTFE scheme allowing for larger time steps compared to the FVFD method.

\section{Conclusion}
In this paper we have proposed a new splitting scheme for one-dimensional reaction-taxis-diffusion systems related to the Keller-Segel system.  The solutions of these systems are well known for having concentrated and diffusive regions simultaneously. In addition, traveling waves and merging phenomena typically occur.

Our splitting has separated a part of the model which is mass conservative in the cell density from the rest of the system. The latter has been approximated by a classical linear finite element method, whereas the approximation of the conservative part has been based on the mass transport strategy. More precisely, we have first transformed the cell density to the corresponding pseudo-inverse cumulative distribution. Then we have discretized the transformed system by the finite difference method and used a cubic spline to account for the chemo-attractant whose evolution is described in the rest subsystem. The splitting method is described in 
Section~\ref{section:num}. In Lemma~\ref{lem:cfl} we have studied the stability of the explicit mass transport method for the conservative part in which we allowed for general nonlinear diffusion. The obtained result has been used to derive a time-step restriction for our scheme.

In Section \ref{sec:experiments} we have presented a series of numerical experiments demonstrating the robustness and reliability of the scheme. In particular, we have used the new method to resolve the Keller-Segel model in the parabolic-elliptic and in the parabolic-parabolic form numerically. We have applied our scheme also to augmentations of these systems by reaction terms, nonlinear diffusion and a volume filling approach. The method has resolved the movement, splitting and aggregation phenomena accurately.  We have verified the mesh convergence of the scheme in both time and space in an application to a simple tumor invasion system in Section \ref{sec:cancer1}. The obtained experimental order of convergence has ranged around two spatially and temporally. Moreover, we have applied the scheme to the uPA-tumor invasion model from \cite{Chaplain.2005} in Section \ref{sec:uPA}. The proposed hybrid
mass transport finite element scheme has been capable to resolve its complex dynamics featuring multiple peaks in the cancer cell concentration without using a fine spatial discretization. By the help of our new method we could also study a combination of the uPA model with the volume filling approach from \cite{HPVolume}.
In addition, we have compared the efficiency of the hybrid mass transport finite element method with a finite volume scheme with adaptive mesh refinement from \cite{Urokinase_paper}. The hybrid mass transport finite element method has not only outperformed the uniform finite volume scheme but it has also delivered slightly better results than the finite volume scheme equipped with adaptive mesh refinement.

%%%%%%%%%%%%%%%%%%%%%%%%%%%%%%%%%%%%%%

\section*{Acknowledgments}
JAC was partially supported by the Royal Society via a Wolfson Research Merit Award and by EPSRC grant number EP/P031587/1. NK was supported by the Max-Planck Graduate Center of the University Mainz.
The research of ML was partially supported by the German Science Foundation (DFG) under the grant TRR 146 \enquote{Multiscale simulation methods for soft matter systems}.
%%%%%%%%%%%%%%%%%%%%%%%%%%%%%%%%%%%%%%

\bibliographystyle{plain} %{plain}
	\bibliography{ckl}
\end{document}